\theoremstyle{plain}
\newtheorem{thm}{Theorem}[section]
\newtheorem{lem}[thm]{Lemma}
\newtheorem{cor}[thm]{Corollary}
\newtheorem{prop}[thm]{Proposition}
\theoremstyle{definition}
\newtheorem{df}[thm]{Definition}
\theoremstyle{remark}
\newtheorem{rem}[thm]{Remark}
\numberwithin{equation}{section}
\newcommand{\R}{\mathbb{R}}
\DeclareMathOperator{\rank}{rank}
\title{
Geometric interpretation of magnitude
}
\author{Yasuhiko Asao and Kiyonori Gomi}
\date{\empty}
\begin{document}

\maketitle
\begin{abstract}
    For an $n\times n$ positive definite symmetric matrix $Z$ with $Z_{ii} = 1$ for all $i$, we show that there exists a set of vectors $V_Z\subset \R^n$ such that the radius $R$ of the circumsphere of $V_Z$ satisfies ${\rm Mag}\ Z = (1-R^2)^{-1}$. This leads us to interpret geometrically several known and new facts on magnitude. In particular, we show that ${\rm Mag}\ Z_{X}< n$ for an $n$-point metric space $X$ of negative type with $n>1$. This result gives a negative answer to a problem given by Gomi--Meckes \cite{GM}. Furthermore, we also have a similar geometric description of magnitude for general real symmetric matrix $Z$ with $Z_{ii} = 1$ for all $i$. In this case, the radius corresponds to that of a circum-quasi-sphere, namely the set of points having a prescribed norm in a vector space endowed with an indefinite inner product.
\end{abstract}

\tableofcontents 

\section{Introduction} \label{sec:introduction}
{\it Magnitude} is a numerical invariant of finite metric spaces defined by Leinster \cite{L}. It is defined via an operation applied to a general square matrix. Namely, for a non-degenerate square matrix $Z$, we define 
\[
{\rm Mag}\ Z := \sum_{ij}(Z^{-1})_{ij},
\]
and for a finite metric space $(X = \{x_1, \dots, x_n\}, d)$, its magnitude is defined as
\[
{\rm Mag}\ X := {\rm Mag}\ Z_X = {\rm Mag}\ (e^{-d(x_i, x_j)})_{ij}.
\]
When the matrix $Z$ is symmetric, we can define its magnitude, even in the degenerate case, as
\[
{\rm Mag}\ Z = \sum_{i}w_i,
\]
provided that we have a vector $w$, called {\it a magnitude weighting}, satisfying $Zw = (1, \dots, 1)^t$, where we denote the transpose of a matrix $C$ by $C^t$. Note that the matrix $Z_X$ for a finite metric space $(X, d)$ is a real symmetric matrix with $Z_{ii} = 1$ for all $i$. The main result in this paper is the following. 

 \begin{thm}\label{thm1}
 Let $Z \in M_n(\R)$ be a positive definite symmetric matrix with $Z_{ii} = 1$ for all $i$. We can choose a non-degenerate square matrix $V = (v_1\ \dots\ v_n)$ satisfying $Z=V^t\cdot V$. Let $R$ be the radius of the circumsphere of points $\{v_1, \dots, v_n\} \subset \R^n$. Namely, $R$ is the radius of the $(n-2)$-dimensional sphere appearing as the intersection of the unit sphere in $\R^n$ and the affine span 
 \[
 {\rm Aff}\{v_1, \dots, v_n\} = \{Va \mid a\in \R^n, \sum_ia_i = 1\}.
 \]
 Then we have
 \[
 {\rm Mag}\ Z = \frac{1}{1-R^2}.
 \]
 Furthermore, for $Va \in \R^n$ being the center of the circumsphere, we have the following expression of a magnitude weighting of $Z$ :
 \[
 w = \frac{1}{1-R^2}a.
 \]
 \end{thm}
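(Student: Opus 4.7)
The plan is to identify the magnitude weighting with a rescaling of the affine-barycentric coordinates of the circumcenter, and then read off ${\rm Mag}\ Z$ from a single Pythagorean identity.

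First observation: since $Z_{ii} = 1$, each column $v_i$ of $V$ is a unit vector, so $v_1, \dots, v_n \in S^{n-1} \subset \R^n$. The weighting equation $Zw = (1, \dots, 1)^t$ rewrites as $V^t(Vw) = (1, \dots, 1)^t$. Setting $p := Vw$, this becomes the system $\langle v_i, p\rangle = 1$ for all $i$, and a short computation
\[
{\rm Mag}\ Z \;=\; \sum_i w_i \;=\; w^t Z w \;=\; \|Vw\|^2 \;=\; \|p\|^2
\]
reduces the problem to locating the point $p$.

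Next I identify $p$ geometrically. Let $c$ be the orthogonal projection of the origin onto $A := {\rm Aff}\{v_1, \dots, v_n\}$. Then $c \in A$ and $c \perp v_i - c$ for every $i$, so the Pythagorean theorem gives $1 = \|v_i\|^2 = \|c\|^2 + \|v_i - c\|^2$. Hence each $v_i$ has the same distance $\sqrt{1 - \|c\|^2}$ to $c$ within $A$, which means $c$ is the circumcenter and $R^2 = 1 - \|c\|^2$. The same orthogonal decomposition yields $\langle v_i, c\rangle = \|c\|^2 = 1 - R^2$ for every $i$, so $p := c/(1-R^2)$ satisfies $\langle v_i, p\rangle = 1$, and $\|p\|^2 = \|c\|^2/(1-R^2)^2 = 1/(1-R^2)$ already produces the claimed magnitude.

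It then remains to present $p$ as $Vw$ with $w$ in the asserted form. Because $V$ is non-degenerate, the $v_i$ are linearly, hence affinely, independent, so there is a unique $a \in \R^n$ with $\sum_i a_i = 1$ and $c = Va$. Setting $w := a/(1-R^2)$ gives $Vw = p$ and $Zw = V^t V w = V^t p = (1, \dots, 1)^t$, confirming that $w$ is a magnitude weighting; summing its entries gives $\sum_i w_i = 1/(1-R^2)$ as an independent cross-check. The only geometric step requiring care is the identification of the circumcenter with the orthogonal projection of the origin onto $A$, and this is essentially immediate from the Pythagorean split once one notes that an equidistant point of affinely independent vectors in their affine hull is unique; the remainder is routine linear algebra.
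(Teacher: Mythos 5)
Your proof is correct. The paper does not prove Theorem 1.1 directly: it deduces it from the general Theorem 3.1, which treats arbitrary symmetric $Z$ with unit diagonal via a decomposition $Z = V^t I_{p,q,r} V$ and quasi-spheres in pseudo-Euclidean space; the proof there starts from the circumcenter $x = V^{p,q}a$ produced by Proposition 2.20, shows that equidistance of $x$ from the $v_i$'s is equivalent to $Za = c\,1_n$, and computes $c = 1 - K^{-1}$. Your argument rests on the same elementary identity (for unit vectors, being equidistant from a point of their affine hull is the same as having constant inner product with that point), but you run it in the opposite direction and in a self-contained way: you start from $Zw = 1_n$, rewrite it as $\langle v_i, Vw\rangle = 1$, observe ${\rm Mag}\ Z = w^t Z w = \|Vw\|^2$, and identify the circumcenter with the orthogonal projection $c$ of the origin onto ${\rm Aff}\{v_1,\dots,v_n\}$ through the Pythagorean split $1 = \|c\|^2 + \|v_i - c\|^2$. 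This yields a short, purely Euclidean proof of the positive definite case (and your identification of the circumcenter as the foot of the perpendicular from the origin is precisely what the paper exploits later in its discussion of the Rayleigh-quotient formula), at the cost of not covering the indefinite and degenerate cases of Theorem 3.1. One sentence is worth adding: you divide by $1-R^2$, so you should note that $c \neq 0$, equivalently $R < 1$; this follows because linear independence of the $v_i$ forces $0 \notin {\rm Aff}\{v_1,\dots,v_n\}$, a point the paper records in the remark immediately after the theorem statement.
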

 Note here that we always have $R<1$ since linearly independent vectors $v_1, \dots, v_n$ are distributed on the unit sphere by the assumption $Z_{ii}= 1$ for all $i$. Furthermore, we can drop the assumptions `positive definite' and `non-degenerate', and obtain an extended claim which is stated as Theorem \ref{main} in the main body. There we should use `quasi-sphere', a set of points with a prescribed `norm' in a pseudo-Euclidean space according to the signature of the matrix $Z$, instead of the usual sphere.

 Such geometric descriptions lead us to understand the following subjects geometrically: 
\begin{itemize}
\item Upper bound of magnitude for negative type finite metric spaces,
\item Criterion for the existence of a magnitude weighting,
\item Criterion for the existence of a positive weighting, namely a magnitude weighting $w$ with $w_i>0$,
\item Rayleigh quotient-like expression of magnitude for positive semi-definite matrices by Leinster \cite{L},
\item Relation between magnitude and {\it the spread} defined by Willerton \cite{W}.
\end{itemize}

In particular, we explain the first subject more in detail here. From the beginning of the history of magnitude theory, the magnitude of finite metric spaces had been anticipated to represent {\it the effective number of points} with respect to the scaling constant $t$. Namely, for an $n$-point  metric space $(X, d)$ and its rescaling $tX:=(X, td)$ for $t>0$, the magnitude ${\rm Mag}\ tX$ is anticipated to approach 
\[
\begin{cases}
1 & \text{as } t \to 0, \\
n & \text{as } t \to \infty, \\
\text{number of clusters (in an appropriate sense)} & \text{ for intermediate } t.
\end{cases}
\]
For example, let us consider the case that $X$ consists of two points. Then it looks like almost one point when we are very far from $X$, which corresponds to the case that $t$ is very small. It becomes more like `a two point space' as we get closer to $X$, namely as $t$ gets larger.  On the other hand, it is known that the complete bipartite graph $K_{2, 3}$ with path metric satisfies 
\[
{\rm Mag}\  tK_{2, 3} = \begin{cases} -\infty & t \to \log\sqrt{2}-0, \\ \infty & t \to \log\sqrt{2}+0,   \end{cases}
\]
which implies the above anticipation is not true for arbitrary finite metric spaces \cite{L}.  Hence the problem is to determine which class of metric spaces satisfies the anticipation, in particular, to determine {\it the upper bound} $\sup_{t>0}{\rm Mag}\ tX$. Our contribution in this context is the determination of this value for the metric spaces with a nice property, i.\ e.\   {\it finite metric spaces of negative type}. Recall that a finite metric space $(X, d)$ is {\it of negative type} if the metric space $(X, \sqrt{d})$ can be embedded into a Euclidean space. This class of metric spaces is relatively well-studied in magnitude theory and the theory of embeddings of metric spaces (\cite{L}, \cite{sch1}, \cite{sch2}, \cite{W}). It is known by Schoenberg \cite{sch2} that a finite metric space $(X, d)$ is of negative type if and only if it is {\it stably positive definite}, namely the matrix $Z_{tX}$ is positive definite for all $t>0$. Now we show the following based on the geometric description in Theorem \ref{thm1}.

\begin{thm}\label{upbd}
For an $n$-point metric space $(X, d)$ of negative type with $n > 1$, we have
\[
{\rm Mag}\ X < n.
\]
\end{thm}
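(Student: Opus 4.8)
The plan is to feed the positive definite matrix $Z:=Z_X$ into Theorem \ref{thm1} and then to reduce the desired bound to a single semidefiniteness inequality that encodes negative type. Since $X$ is of negative type it is stably positive definite, so in particular $Z$ is positive definite and Theorem \ref{thm1} applies: writing $Z=V^t V$ with $v_1,\dots,v_n$ on the unit sphere, we obtain ${\rm Mag}\ X=\frac{1}{1-R^2}$, where $R$ is the circumradius. Because $1-R^2=h^2$ is exactly the squared distance $h^2$ from the origin to ${\rm Aff}\{v_1,\dots,v_n\}$ (the center $Va$ has $|Va|^2+R^2=1$), the assertion ${\rm Mag}\ X<n$ is equivalent to $R^2<\frac{n-1}{n}$, i.e.\ to $h^2>\frac1n$. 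I would record at the outset the Rayleigh description ${\rm Mag}\ Z=\max_{x\neq0}\frac{(\mathbf 1^t x)^2}{x^t Z x}$, valid since Cauchy--Schwarz in the $Z$-inner product gives $\max=\mathbf 1^t Z^{-1}\mathbf 1$. In this form ${\rm Mag}\ Z\le n$ becomes the clean statement $Z\succeq\frac1n\mathbf 1\mathbf 1^t$, with strict definiteness corresponding to ${\rm Mag}\ Z<n$.

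The geometric content is that $\sqrt{(n-1)/n}$ is precisely the circumradius of the regular simplex inscribed in $S^{n-1}$, realized by $Z=I$ (orthonormal $v_i$, i.e.\ $d_{ij}\to\infty$); the theorem says negative type pushes the circumradius strictly below this extremal value. To use negative type quantitatively I would pass to the Euclidean embedding of $\sqrt d$: there are points $q_1,\dots,q_n$ with $d_{ij}=|q_i-q_j|^2$, whence $Z_{ij}=e^{-|q_i-q_j|^2}$ is a Gaussian kernel. Using $e^{-|q_i-q_j|^2}=\int_{\R^m}e^{2\sqrt{-1}\langle\xi,q_i-q_j\rangle}\,d\gamma(\xi)$ with $\gamma=N(0,\tfrac12 I)$, one realizes $Z$ as the Gram matrix of the unit ``characters'' $\chi_i(\xi)=e^{2\sqrt{-1}\langle\xi,q_i\rangle}$ in $L^2(\gamma)$, so that $x^t Z x=\bigl\|\sum_i x_i\chi_i\bigr\|^2_{L^2(\gamma)}$ while $\mathbf 1^t x=\bigl(\sum_i x_i\chi_i\bigr)(0)$. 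In this language $Z\succeq\frac1n\mathbf 1\mathbf 1^t$ is exactly the assertion that evaluation at $0$ on the $n$-dimensional space ${\rm span}\{\chi_i\}$ has squared norm at most $n$; since each $\chi_i$ is a unit vector with $\chi_i(0)=1$, the orthonormal configuration saturates the bound at $n$, and the task is to show that genuinely distinct characters keep it strictly below $n$. Equivalently, writing $Z=\int_{\R^m}u_\xi u_\xi^{*}\,d\gamma(\xi)$ with $u_\xi=(\chi_i(\xi))_i$ and $u_0=\mathbf 1$, one wants $\int u_\xi u_\xi^{*}\,d\gamma\succeq\frac1n u_0 u_0^{*}$.

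The main obstacle is precisely the extraction of the sharp constant $\frac1n$, i.e.\ the quantitative use of negative type, which I expect to be the genuine heart of the argument. It is here that the hypothesis is indispensable: positivity of the entries alone does not suffice, since a symmetric matrix with unit diagonal and off-diagonal entries in $(0,1)$ that is \emph{not} of negative type can satisfy ${\rm Mag}\ Z>n$ (the associated $\sqrt d$ then violates the triangle inequality). One must also resist arguing from $\sum_i Z_{ij}>1$ together with a positive weighting: the minimizing barycentric coordinates $a$, and hence the weighting $w=\frac{1}{1-R^2}a$, need not be nonnegative, so no termwise estimate is available. I would therefore attack $\int u_\xi u_\xi^{*}\,d\gamma\succeq\frac1n u_0 u_0^{*}$ directly, for example by bounding the evaluation functional from below via projection onto a controlled finite-dimensional subspace of $L^2(\gamma)$ (low Hermite modes $1,\xi_1,\dots,\xi_m$), or by a variational comparison against the regular simplex. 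Finally, the strict inequality ${\rm Mag}\ X<n$ rather than $\le n$ should follow once the nonstrict bound is established together with the observation that equality forces the extremal orthonormal configuration $Z=I$, which is impossible for a genuine $n$-point space with $n>1$ and finite distances.
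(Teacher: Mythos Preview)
Your reductions are all correct: ${\rm Mag}\ Z<n$ is equivalent to $h^2>1/n$ with $h$ the distance from $0$ to ${\rm Aff}\{v_1,\dots,v_n\}$, which in turn is the semidefinite inequality $Z-\tfrac1n\,1_n1_n^t\succ0$, and via Bochner this becomes the statement that evaluation at $0$ on ${\rm span}\{\chi_i\}\subset L^2(\gamma)$ has operator norm below $\sqrt n$. The strictness argument at the end is also fine. But the proposal does not actually prove the key inequality; you say so yourself (``the main obstacle is precisely the extraction of the sharp constant $\tfrac1n$''), and the two suggested routes---projecting onto low Hermite modes, or a variational comparison with the regular simplex---are not carried out and do not obviously work. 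Projecting onto finitely many Hermite modes gives no control on $f(0)$ in terms of $\lVert f\rVert_{L^2(\gamma)}$ because the characters $\chi_i$ have mass spread across all modes; and the pointwise Cauchy--Schwarz bound $\lvert f(\xi)\rvert\le\sqrt n\,\lvert x\rvert$ compares $f(0)$ to $\lvert x\rvert$ rather than to $\lVert f\rVert_{L^2}=\sqrt{x^tZx}$, which can be smaller.

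The paper closes exactly this gap with a device your proposal does not contain: the halving/squaring trick $Z_X=\bigl(Z_{\frac12X}\bigr)^{(2)}$, i.e.\ $Z_X$ is the entrywise (Hadamard) square of $Z_{\frac12 X}$, combined with a Veronese embedding. Writing $Z_{\frac12X}=V^tV$ with $V=(v_1\ \cdots\ v_n)$ and sending $e_i\mapsto v_iv_i^t\in(M_n(\R),\langle\cdot,\cdot\rangle_F)$, one obtains an isometric embedding of $(\R^n,\langle\cdot,\cdot\rangle_{Z_X})$ into matrices with the Frobenius inner product, under which the affine hull ${\rm Aff}\{e_i\}$ goes to ${\rm Aff}\{v_iv_i^t\}$. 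The single observation $\langle I_n,\,v_iv_i^t\rangle_F=\lvert v_i\rvert^2=1$ for every $i$ shows that $I_n$ has constant Frobenius inner product $1$ with the entire affine hull; Cauchy--Schwarz against $I_n$ (with $\lvert I_n\rvert_F^2=n$) then gives $\langle X,X\rangle_F\ge 1/n$ for every $X$ in that hull, i.e.\ $h^2\ge 1/n$ and hence ${\rm Mag}\ Z_X\le n$. Equality forces $X$ to be a scalar multiple of $I_n$, which unwinds to $Z_{\frac12X}=I_n$, impossible for $n>1$. This is the missing idea: negative type is used not through the Gaussian integral but through the much simpler fact that $\tfrac12X$ is again positive definite, so that the half-scale factorization $Z_{\frac12X}=V^tV$ exists and supplies the Veronese vectors.
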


We remark that the second author and Meckes \cite{GM} showed the inequality ${\rm Mag}\ X\leq n$ for $n$-point metric spaces of negative type. Their proof relies on a non-trivial matrix inequality, while the proof in this paper does not. In addition, they left a problem to determine whether or not there exists an $n$-point metric space of negative type satisfying ${\rm Mag}\ X = n > 3$. Our Theorem \ref{upbd} solves their problem negatively.

\medskip

In Theorem \ref{thm1}, the circumsphere of the points  $\{ v_1, \ldots, v_n \}$ is considered. Considering the circumsphere of the points $\{ 0, v_1, \ldots, v_n \}$ instead, we can have another geometric description of the magnitude different from that in Theorem \ref{thm1}:

\begin{thm} \label{thm:another_description}
Let $Z \in M_n(\R)$ be a positive definite symmetric matrix with $Z_{ii} = 1$ for all $i$.  Let $v_1, \dots, v_n \in \R^n$ be linearly independent vectors such that $Z = V^t\cdot V$ with $V = (v_1\ \dots\ v_n)$. Then we have
\[
{\rm Mag}\ Z = 4 R^2,
\]
where $R$ is the radius of the circumsphere of the points $\{0, v_1, \dots, v_n \}$. 
\end{thm}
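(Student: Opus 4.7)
The plan is to characterize the center and radius of the circumsphere of $\{0, v_1, \ldots, v_n\}$ directly from the two conditions that define the sphere, and then recognize the resulting quantity as $\mathbf{1}^t Z^{-1} \mathbf{1}$. Since $v_1, \ldots, v_n$ are linearly independent, the $n+1$ points $\{0, v_1, \ldots, v_n\}$ are affinely independent in $\R^n$, so they determine a unique circumsphere, with some center $c \in \R^n$ and radius $R$.

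The first step is to translate the sphere conditions into algebraic equations exploiting $Z_{ii} = 1$, i.e.\ $\|v_i\|^2 = 1$. The assumption that $0$ lies on the sphere gives $\|c\|^2 = R^2$. Expanding $\|v_i - c\|^2 = R^2$ and substituting $\|c\|^2 = R^2$ together with $\|v_i\|^2 = 1$ collapses to the strikingly simple relation $\langle v_i, c\rangle = 1/2$ for every $i$. In matrix form this is
\[
V^t c = \tfrac{1}{2}\,\mathbf{1}, \qquad \mathbf{1}=(1,\ldots,1)^t.
\]

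The second step is the computation. Since $Z = V^t V$ is positive definite, $V$ is invertible, so $c = \tfrac{1}{2}(V^t)^{-1}\mathbf{1}$, and
\[
R^2 \;=\; c^t c \;=\; \tfrac{1}{4}\,\mathbf{1}^t V^{-1}(V^t)^{-1}\mathbf{1} \;=\; \tfrac{1}{4}\,\mathbf{1}^t Z^{-1}\mathbf{1} \;=\; \tfrac{1}{4}\,\mathrm{Mag}\, Z,
\]
which is exactly the claimed identity $\mathrm{Mag}\, Z = 4R^2$. As a byproduct one reads off that the weighting $w = Z^{-1}\mathbf{1}$ can be recovered geometrically from $c$ via $w = 2 V^{-1} c$.

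There is no real obstacle here: the only thing to check carefully is that the circumsphere is well-defined, which follows from the linear (hence affine, together with the origin) independence of the $v_i$. The entire content of the theorem is that the hypothesis $Z_{ii}=1$ places every $v_i$ on the unit sphere, which is precisely what makes the quadratic defining equations of the circumsphere through the origin degenerate into the linear system $V^t c = \tfrac{1}{2}\mathbf{1}$, whose norm-squared solution is $\tfrac{1}{4}\mathrm{Mag}\,Z$.
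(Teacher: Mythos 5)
Your proof is correct and follows essentially the same route as the paper's: both reduce the circumsphere conditions, via $\lVert v_i\rVert^2 = Z_{ii} = 1$, to the linear system $V^t c = \tfrac{1}{2}1_n$ and then compute $R^2 = \tfrac{1}{4}1_n^t Z^{-1} 1_n = \tfrac{1}{4}\,{\rm Mag}\ Z$. The only difference is cosmetic: the paper verifies that $c = \tfrac{1}{2}Vw$ (with $w$ the magnitude weighting) solves the system, which lets the argument extend to the degenerate and indefinite cases of Theorem \ref{thm:another}, whereas you invert $V^t$ directly, which is fine in the positive definite setting.
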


This formula would also have the potential to reproduce the facts about the magnitude reviwed so far, but we will not pursue this route in this paper, except for an alternative proof of Theorem \ref{upbd}.

\medskip

Finally, we note that, during the final stage of preparing this paper, an independent work by Devriendt \cite{D} appeared, containing the same formula as in Theorem \ref{thm1}.

\medskip

The rest of this paper is organized as follows. After preliminaries for fixing notations in Section \ref{prel}, we give a proof for the geometric interpretation of the magnitude described by the radius of a sphere or quasi-sphere in Section \ref{sec:prf}. In Section \ref{sec:upbd}, we prove Theorem \ref{upbd}. In Section \ref{sec:geomin}, we discuss the geometric interpretation of the other facts on magnitude listed above. Finally, in Section \ref{sec:another}, we provide the other gometric description as in Theorem \ref{thm:another_description} and the other proof of Theorem \ref{upbd}.

\subsubsection*{Acknowledgements}
The authors are grateful to Mark Meckes for pointing out our crucial error in the first version of the manuscript. They also thank Kouichi Taira and Tomoshige Yukita for fruitful discussions and comments.  The first author was supported by JSPS KAKENHI 24K16927.

\section{Preliminaries}\label{prel}
Throughout this paper, all vector spaces are defined over the real field $\mathbb{R}$.
\subsection{Linear algebra}
In this subsection, we collect basic facts on inner product spaces for the sake of fixing terminologies. Proofs for well-known facts are omitted; readers may refer to standard textbooks if necessary.

\begin{df}
Let $V$ be a vector space.
\begin{enumerate}
\item A symmetric bilinear form $\langle -, -\rangle : V\otimes V \to \R$ is called  {\it an inner product on} $V$.
\item Let $B = \{v_i\}_i \subset V$ be a basis of $V$.  {\it The representation} of an inner product $\langle-, -\rangle$ with respect to the basis $B$ is the symmetric matrix $(\langle v_i, v_j\rangle)_{ij}$. For $p, q, r$ being the number of positive, negative and zero eigenvalues of the matrix $(\langle v_i, v_j\rangle)_{ij}$ respectively, we call the tuple $(p, q, r)$ {\it the signature} of this inner product. By  Sylvester's law of inertia,  the signature of an inner product is independent of the choice of the basis $B$.
\item For $v \in V$ and an inner product $\langle -, -\rangle$ on $V$, we denote the map $V \to \R ; w \mapsto \langle v, w\rangle$ by $\langle v, -\rangle$. An inner product $\langle -, -\rangle$ is called {\it non-degenerate} if  $\langle v, -\rangle = 0$ implies $v = 0$. Equivalently, $\langle -, -\rangle$ is non-degenerate if its signature $(p ,q, r)$ satisfies $r=0$.

\item A non-degenerate inner product $\langle -, -\rangle$ is called {\it positive definite} if $q=0$, and called {\it indefinite} otherwise. Equivalently, $\langle -, -\rangle$ is positive definite if $\langle v, v \rangle > 0$ for all $0 \neq v \in V$.
\end{enumerate}
\end{df}
In the following, we denote the signature of a non-degenerate inner product space by $(p, q)$ instead of $(p ,q, 0)$.
\begin{df}
Let $(V, \langle -, - \rangle)$ be a non-degenerate inner product space of signature $(p,q)$.
A basis $\{\varepsilon_i\}_i \subset V$ is called {\it orthonormal} if
\[
\langle \varepsilon_i, \varepsilon_j \rangle =
\begin{cases}
0 & (i \ne j),\\
1 & (1 \le i = j \le p),\\
-1 & (p+1 \le i = j \le p+q).
\end{cases}
\]
\end{df}
The fact that every real symmetric matrix can be diagonalized by an orthogonal matrix implies the following.
\begin{prop}\label{prop:orthobasis}
Every non-degenerate inner product space $(V, \langle -, - \rangle)$ admits an orthonormal basis.
\end{prop}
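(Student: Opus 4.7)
The plan is to start from an arbitrary basis of $V$, apply the stated spectral theorem to its Gram matrix to obtain a diagonalizing orthogonal change of basis, and then normalize each new basis vector so that its self-pairing becomes $\pm 1$.

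In detail, I would first fix any basis $\{v_1, \dots, v_n\}$ of $V$ and form its Gram matrix $G = (\langle v_i, v_j \rangle)_{ij}$, which is real symmetric. The cited spectral theorem supplies an orthogonal matrix $P$ with $P^t G P = \mathrm{diag}(\lambda_1, \dots, \lambda_n)$, the $\lambda_j$ being the eigenvalues of $G$. Defining a new basis by $u_j := \sum_i P_{ij} v_i$, one computes $\langle u_j, u_k \rangle = (P^t G P)_{jk} = \lambda_j \delta_{jk}$. Since non-degeneracy is characterized by $r = 0$, the matrix $G$ is invertible, so every $\lambda_j$ is nonzero. I would then set $\varepsilon_j := u_j / \sqrt{|\lambda_j|}$, which makes $\langle \varepsilon_j, \varepsilon_j \rangle = \mathrm{sign}(\lambda_j) \in \{+1, -1\}$ and $\langle \varepsilon_j, \varepsilon_k \rangle = 0$ for $j \neq k$. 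A final permutation of indices places the $+1$'s before the $-1$'s, yielding the claimed orthonormal basis.

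There is no genuine obstacle here—the argument is routine linear algebra—but two minor points deserve a remark. First, the numbers of $+1$'s and $-1$'s obtained this way must coincide with the declared signature $(p, q)$; this follows because the signs of the $\lambda_j$ are the signs of the eigenvalues of $G$, and Sylvester's law of inertia (already invoked in the preceding definition) guarantees these counts do not depend on the choice of initial basis $\{v_i\}$. Second, the invertibility of $G$ inherited from non-degeneracy is immediate: writing $v = \sum_i w_i v_i$, the condition $\langle v, - \rangle = 0$ translates to $G w = 0$, so $G$ being injective is the same as non-degeneracy of the form. Hence the reduction to the spectral theorem is the entire content of the proof.
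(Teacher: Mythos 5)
Your proof is correct and follows exactly the route the paper intends: the paper gives no written proof but states that the proposition follows from orthogonal diagonalizability of real symmetric matrices, and your argument is precisely the standard fleshing-out of that remark (Gram matrix, orthogonal diagonalization, rescaling by $\sqrt{|\lambda_j|}$, reordering). Nothing further is needed.
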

\begin{df}
Let $(V, \langle -, - \rangle)$ be an inner product space. A linear subspace $W \subset V$ is called {\it non-degenerate} if the restriction of $\langle -, - \rangle$ to $W$ is non-degenerate.
\end{df}
\begin{df}
    For a linear subspace $W$ of an inner product space $(V, \langle -, - \rangle)$, we define its {\it orthogonal complement} $W^\perp = \{ v \in V \mid \langle v, w \rangle = 0, \forall w \in W \}$.
\end{df}
The non-degeneracy of a linear subspace $W$ implies that $V = W \oplus W^{\perp}$, which leads to the following.
\begin{prop}\label{perp}
Let $(V, \langle -, - \rangle)$ be a non-degenerate inner product space of signature $(p, q)$. Suppose $W$ is a non-degenerate linear subspace of $V$, and the restriction of $\langle -, - \rangle$ to $W$ has the signature $(p', q')$. Then the orthogonal complement $W^\perp$ is also a non-degenerate linear subspace, and the restriction of $\langle -, - \rangle$ to $W^\perp$ has the signature $(p - p', q - q')$.
\end{prop}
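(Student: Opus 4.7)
The plan is to build on the direct-sum decomposition $V = W \oplus W^{\perp}$ already noted immediately before the statement, then separately establish (i) non-degeneracy of $W^{\perp}$ and (ii) the signature count, the latter via Proposition \ref{prop:orthobasis} and Sylvester's law of inertia.

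For the non-degeneracy of $W^\perp$, I would take an arbitrary $v \in W^\perp$ satisfying $\langle v, u\rangle = 0$ for every $u \in W^\perp$ and aim to conclude $v = 0$. Using the decomposition $V = W \oplus W^\perp$, any $x \in V$ can be written as $x = w + u$ with $w \in W$ and $u \in W^\perp$; then
\[
\langle v, x\rangle = \langle v, w\rangle + \langle v, u\rangle = 0,
\]
where the first term vanishes because $v \in W^\perp$ and the second by hypothesis. Since $\langle -, -\rangle$ is non-degenerate on $V$, we obtain $v = 0$, so $W^\perp$ is non-degenerate.

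For the signature, I would apply Proposition \ref{prop:orthobasis} to both $W$ and $W^\perp$. Writing the signature of $W^\perp$ as $(p'', q'')$, choose an orthonormal basis $\{\varepsilon_1, \dots, \varepsilon_{p'+q'}\}$ of $W$ and an orthonormal basis $\{\varepsilon'_1, \dots, \varepsilon'_{p''+q''}\}$ of $W^\perp$. By the direct sum $V = W \oplus W^\perp$ their union is a basis of $V$, and by definition of $W^\perp$ any vector in the first family is orthogonal to any vector in the second. Hence the Gram matrix of $\langle -, -\rangle$ in the combined basis is block-diagonal with entries $\pm 1$ on the diagonal, so it is orthonormal in the sense of our definition and shows that $V$ has signature $(p' + p'',\, q' + q'')$. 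By Sylvester's law of inertia, $(p' + p'', q' + q'') = (p, q)$, and therefore $(p'', q'') = (p - p', q - q')$, as claimed.

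I do not expect a serious obstacle here; the only subtle point is justifying that combining the two orthonormal bases really yields an orthonormal basis of $V$, which rests on the orthogonality between $W$ and $W^\perp$ together with the direct-sum decomposition mentioned in the text. Everything else is a mechanical invocation of the previous proposition and Sylvester's law.
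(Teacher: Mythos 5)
Your proof is correct and follows exactly the route the paper indicates: the paper omits a written proof, remarking only that the statement follows from the decomposition $V = W \oplus W^{\perp}$, and your argument is the standard fleshing-out of that remark (non-degeneracy of $W^\perp$ from non-degeneracy of $V$ plus the direct sum, then signature counting via combined orthonormal bases and Sylvester's law). The only cosmetic point is that the combined basis may need reordering so that the $+1$ vectors precede the $-1$ vectors as in the paper's definition of orthonormality, which does not affect the count.
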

Propositions \ref{prop:orthobasis} and \ref{perp} imply the following.
\begin{prop}\label{lem:extendbasis}
Let $W$ be a non-degenerate linear subspace of a non-degenerate inner product space $(V, \langle -, - \rangle)$.
Every orthonormal basis of $W$ can be extended to an orthonormal basis of $V$.
\end{prop}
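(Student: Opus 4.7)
The plan is to assemble the result from the two preceding propositions, which together isolate exactly the structural facts needed. Given an orthonormal basis $\{\varepsilon_1, \dots, \varepsilon_{p'+q'}\}$ of $W$, I would first invoke Proposition \ref{perp} to conclude that $W^\perp$ is itself a non-degenerate inner product space of signature $(p-p', q-q')$, and in particular that $V = W \oplus W^\perp$ as vector spaces. This reduces the problem to finding orthonormal vectors in $W^\perp$ to fill out the basis.

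Next I would apply Proposition \ref{prop:orthobasis} to the non-degenerate inner product space $W^\perp$, which supplies an orthonormal basis $\{\eta_1, \dots, \eta_{(p-p')+(q-q')}\}$ of $W^\perp$. The crucial observation is then that the union $\{\varepsilon_i\} \cup \{\eta_j\}$ is automatically a basis of $V$ (by the direct sum decomposition) and automatically satisfies $\langle \varepsilon_i, \eta_j\rangle = 0$ for all $i, j$ by the very definition of $W^\perp$. Orthonormality within each half holds by construction, so the combined family is orthonormal up to the ordering prescribed in the definition.

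The only remaining step, and the sole potential source of friction, is to reorder the combined list so that the $p$ vectors of positive square-norm precede the $q$ vectors of negative square-norm, as required by the convention for orthonormal bases with signature $(p,q)$. Concretely, I would list first the $p'$ positive vectors from $W$ followed by the $p - p'$ positive vectors from $W^\perp$, then the $q'$ negative vectors from $W$ followed by the $q - q'$ negative vectors from $W^\perp$. This reordering preserves orthonormality and exhibits the resulting family as an orthonormal basis of $V$ in the sense of the definition, completing the proof. I expect no substantive obstacle beyond this bookkeeping, since the heavy lifting (existence of orthonormal bases and non-degeneracy of orthogonal complements) has already been done.
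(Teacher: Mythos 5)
Your proof is correct and follows exactly the route the paper indicates: it derives the proposition from Proposition \ref{perp} (non-degeneracy and complementary signature of $W^\perp$, giving $V = W \oplus W^\perp$) together with Proposition \ref{prop:orthobasis} applied to $W^\perp$, then merges and reorders the two orthonormal families. The paper gives no further detail, so your write-up, including the final bookkeeping on ordering, matches its intended argument.
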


\begin{df}
Let $W$ be a linear subspace of an inner product space $(V, \langle -, - \rangle)$. A vector $v \in V$ is said to be {\it perpendicular to} $W$ if $\langle v, W\rangle = 0$.
\end{df}

\begin{prop}\label{choco}
Let $(V, \langle -, - \rangle)$ be a non-degenerate inner product space. For a linear subspace $W\subsetneq V$, we have a non-zero vector $v \in  V$ that is perpendicular to $W$. 
\end{prop}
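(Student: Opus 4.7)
The plan is to prove the existence of a non-zero perpendicular vector by a dimension count on a linear map that captures perpendicularity, rather than by appealing to an orthogonal decomposition of $V$. Note that Proposition \ref{perp} is not directly applicable here: $W$ is not assumed to be non-degenerate, and indeed $W$ could be totally isotropic (so that $W \subseteq W^\perp$), in which case the decomposition $V = W \oplus W^\perp$ fails outright.

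First, I would consider the linear map $\Phi : V \to W^*$ defined by $\Phi(v) = \langle v, - \rangle|_W$. By definition, $\ker \Phi$ is precisely the set of vectors in $V$ perpendicular to $W$, so the claim reduces to showing $\ker \Phi \ne 0$, equivalently $\dim \im \Phi < \dim V$.

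Next, I would verify that $\Phi$ is surjective. Non-degeneracy of $\langle -, - \rangle$ on $V$ makes the map $V \to V^*$, $v \mapsto \langle v, - \rangle$, injective, hence an isomorphism because $\dim V < \infty$. The restriction map $V^* \to W^*$ is surjective since every linear functional on $W$ extends to $V$ (choose any linear complement of $W$ in $V$ and extend by zero). Composing these exhibits $\Phi$ as the composite of two surjections, so $\dim \im \Phi = \dim W < \dim V$ by the assumption $W \subsetneq V$. Rank--nullity then delivers $\dim \ker \Phi > 0$, and any non-zero element of $\ker \Phi$ is the desired vector.

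The main conceptual step — really the only one — is recognizing that the elementary intuition ``the orthogonal complement has positive dimension'' must be implemented via rank--nullity applied to $\Phi$, not via an orthogonal splitting of $V$, because such a splitting is unavailable for arbitrary (possibly degenerate) subspaces of an indefinite inner product space. Once this reformulation is made, the proof is essentially routine.
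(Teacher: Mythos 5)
Your argument is correct and is essentially the paper's own proof: the authors define $f : V \to \R^m$, $v \mapsto (\langle w_1, v\rangle, \dots, \langle w_m, v\rangle)^t$ for a basis $w_1,\dots,w_m$ of $W$, which is exactly your $\Phi$ after identifying $W^* \cong \R^m$, and conclude $\ker f \neq 0$ from $m = \dim W < \dim V$. Your verification that $\Phi$ is surjective (the only place non-degeneracy is invoked) is superfluous for this conclusion, since $\dim \im \Phi \le \dim W^* < \dim V$ already forces a non-trivial kernel by rank--nullity.
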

\begin{proof}
    Let $w_1, \dots, w_m \in W$ be a basis of $W$, and define a linar map $f : V \to \R^m ; v \mapsto (\langle w_1, v\rangle, \dots, \langle w_m, v\rangle)^t$. From the assumption $m = \dim W< \dim V$, we have ${\rm ker}\ f \neq 0$, which implies the claim.
\end{proof}
\begin{df}
Let  $ (V, \langle -, - \rangle)$ be a non-degenerate inner product space. A non-zero vector $v \in V$ is called 
\[
\begin{cases}
\text{{\it spacelike}} & \text{if } \langle v, v \rangle > 0,\\
\text{{\it timelike}} & \text{if } \langle v, v \rangle < 0,\\
\text{{\it lightlike}} & \text{if } \langle v, v \rangle = 0.
\end{cases}
\]
\end{df}

We have the following from Proposition \ref{perp}.
\begin{prop}\label{codim1}
Let $(V, \langle-, -\rangle)$ be an $n$-dimensional non-degenerate inner product space. For an $(n-1)$-dimensional linear subspace $W \subset V$, the following are equivalent.
\begin{enumerate}
\item $W$ is non-degenerate
\item There exists a non-lightlike vector $v \in V$ that is perpendicular to $W$.
\end{enumerate}
\end{prop}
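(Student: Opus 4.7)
The plan is to reduce the proposition to the elementary fact that a one-dimensional inner product space is non-degenerate if and only if it is spanned by a non-lightlike vector. The geometric input I will rely on is that, since $V$ is non-degenerate and $\dim W = n-1$, the orthogonal complement $W^{\perp}$ is one-dimensional. This dimension formula follows from the surjectivity of the restriction map $V \to W^{*}$, $v \mapsto \langle v, -\rangle|_W$, whose kernel is exactly $W^{\perp}$; surjectivity is a standard consequence of the non-degeneracy of $\langle -, -\rangle$ on $V$.

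For the implication $(1)\Rightarrow(2)$, I would apply Proposition \ref{perp} to the non-degenerate subspace $W$ to conclude that $W^{\perp}$ is also non-degenerate. Since $W^{\perp}$ is one-dimensional, any non-zero $v\in W^{\perp}$ must then satisfy $\langle v,v\rangle\neq 0$, for otherwise the restriction of $\langle -,-\rangle$ to $W^{\perp}$ would vanish identically, contradicting its non-degeneracy. Such a $v$ is perpendicular to $W$ and non-lightlike, as required.

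For the reverse implication $(2)\Rightarrow(1)$, I would argue by contradiction. Given a non-lightlike $v$ perpendicular to $W$, dimension counting forces $W^{\perp} = \R v$. If $W$ were degenerate, there would exist $0\neq w\in W$ with $\langle w,W\rangle=0$; then $w\in W^{\perp}=\R v$, so $w=cv$ for some $c\neq 0$. But $w\in W$ together with $\langle w,W\rangle=0$ implies $\langle w,w\rangle=0$, hence $\langle v,v\rangle = c^{-2}\langle w,w\rangle=0$, contradicting that $v$ is non-lightlike. Therefore $W$ must be non-degenerate.

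I do not anticipate a serious obstacle; the entire argument is bookkeeping of dimensions and signatures. The only point requiring some care is the dimension identity $\dim W + \dim W^{\perp} = \dim V$ in a non-degenerate ambient space, which must be established before Proposition \ref{perp} can be brought to bear on this codimension-one situation; once that is in hand, both implications are immediate.
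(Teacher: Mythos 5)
Your argument is correct and follows the same route the paper intends: the paper derives this proposition directly from Proposition \ref{perp} together with the dimension identity $\dim W + \dim W^{\perp} = \dim V$, which is exactly the machinery you use. The only cosmetic difference is that for $(2)\Rightarrow(1)$ you run a contradiction argument, whereas one can equally apply Proposition \ref{perp} to the non-degenerate line $\R v$ and identify $W$ with $(\R v)^{\perp}$; both are immediate once the dimension count is in place.
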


\subsection{Pseudo-Euclidean space}
In the following, $I_{p, q, r}$ denotes the square matrix defined by 
\[
(I_{p, q, r})_{ij} = \begin{cases} 1 & 1\leq i=j \leq p, \\ -1 & p+1 \leq i=j \leq p+q,\\ 0 & \text{otherwise}, \end{cases}
\]
with appropriate size in the context. We abbreviate $I_{p, q} = I_{p, q, 0}$ and $I_p = I_{p, 0, 0}$.
\begin{df}
\begin{enumerate}
\item Let $Z \in M_n(\R)$ be a symmetric matrix. We denote the inner product on $\R^n$ defined by $x\otimes y \mapsto x^tZy$ by $\langle -, -\rangle_{Z}$. We abbreviate $\langle -, -\rangle_{p, q, r} := \langle -, -\rangle_{I_{p, q, r}}$ and $\langle -, -\rangle_{p, q} := \langle -, -\rangle_{I_{p, q}}$.
\item We define the signature of a symmetric matrix $Z \in M_n(\R)$ as the signature of the inner product space $(\R^n, \langle-, -\rangle_Z)$.
\item We call the inner product space $(\R^{p+q}, \langle-, -\rangle_{p, q})$ {\it the pseudo-Euclidean space with signature} $(p, q)$. We denote it by $\R^{p, q}$.
\end{enumerate}
\end{df}

\begin{prop}\label{standard}
For a symmetric matrix $Z \in M_n(\R)$, the inner product space $(\R^n, \langle-, -\rangle_Z)$ is isomorphic to the inner product space $(\R^n, \langle-, -\rangle_{p, q, r})$ for some $p, q, r$.
\end{prop}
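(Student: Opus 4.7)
The plan is to apply the spectral theorem for real symmetric matrices followed by a diagonal rescaling. By the spectral theorem there exists an orthogonal matrix $O \in M_n(\R)$ and a diagonal matrix $D$ with $O^t Z O = D$, whose diagonal entries are the eigenvalues of $Z$. Permuting the columns of $O$ (itself an orthogonal transformation) if necessary, I may arrange the diagonal of $D$ so that the first $p$ entries are positive, the next $q$ are negative, and the last $r$ are zero, where $(p,q,r)$ is the signature of $Z$ in the sense defined in the paper.

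Next I would normalize the nonzero eigenvalues by a diagonal rescaling. Define the diagonal matrix $S$ by $S_{ii} = |D_{ii}|^{-1/2}$ for $1 \le i \le p+q$ and $S_{ii} = 1$ for $p+q+1 \le i \le n$. Then $S$ is invertible, and a direct computation yields $S^t D S = I_{p,q,r}$. Setting $Q = OS$, the matrix $Q$ is invertible and satisfies $Q^t Z Q = I_{p,q,r}$.

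Finally, the linear map $\phi : \R^n \to \R^n$ defined by $\phi(x) = Qx$ satisfies
\[
\langle \phi(x), \phi(y)\rangle_Z = (Qx)^t Z (Qy) = x^t (Q^t Z Q) y = x^t I_{p,q,r} y = \langle x, y\rangle_{p,q,r}
\]
for all $x, y \in \R^n$. Hence $\phi$ is an isomorphism of inner product spaces from $(\R^n, \langle-,-\rangle_{p,q,r})$ to $(\R^n, \langle-,-\rangle_Z)$, and its inverse gives the desired isomorphism in the direction stated in the proposition.

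The argument is essentially a restatement of the orthogonal diagonalizability of real symmetric matrices together with Sylvester's law of inertia, both already alluded to earlier in the section. There is no substantive obstacle; the only point requiring care is that the zero eigenvalues must be left untouched in the rescaling step $S$, so that $Q = OS$ remains invertible while still bringing $Z$ to the canonical form $I_{p,q,r}$.
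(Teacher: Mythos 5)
Your proof is correct: the spectral theorem plus the diagonal rescaling $S$ brings $Z$ to $I_{p,q,r}$, and the resulting invertible $Q$ gives the required isometry. The paper omits a proof of this proposition as a well-known fact, and your argument is precisely the standard one it implicitly relies on (orthogonal diagonalization together with Sylvester's law of inertia), so there is nothing to compare beyond noting that you have simply written it out in full.
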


\subsection{Affine space}
\begin{df}
Let $V$ be a vector space.
\begin{enumerate}
\item  {\it An affine subspace} of $V$ is a subset $A \subset V$ which is expressed as $A = v + W = \{v+w \mid w \in W\}$ for a vector $v \in V$ and a linear subspace $W \subset V$. We define {\it the dimension} of this affine subspace $A$ by $\dim A = \dim W$.
\item For a subset $S \subset V$, we denote the smallest affine subspace of $V$ containing $S$ by ${\rm Aff}\ S$.
\end{enumerate}
\end{df}
When $S$ is a finite set, we have ${\rm Aff} \ S = v + {\rm Span}\{u-v\}_{u\in S}$ and $\R v + {\rm Span}\{u-v\}_{u\in S} = {\rm Span}\ S$ for $v \in S$, where ${\rm Span}\ S$ denotes the smallest linear subspace of $V$ containing $S$. This leads to the following.
\begin{prop}\label{affine}
Let $V$ be a vector space, and let $S = \{v_1, \dots, v_n\}\subset V$ be a subset.
\begin{enumerate}
\item We have
\[
{\rm Aff}\ S = \{\sum_{i=1}^na_iv_i \mid a_i \in \R, \sum_i a_i = 1\}.
\]
\item We have $\dim {\rm Aff}\ S \geq \dim {\rm Span}\ S-1$.
\end{enumerate}
\end{prop}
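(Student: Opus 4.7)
My plan is to build both parts directly from the identity ${\rm Aff}\, S = v_1 + {\rm Span}\{v_i - v_1\}_{i=1,\ldots,n}$ highlighted in the paragraph preceding the proposition; this is the definitional content we need to unwind.

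For part (1), I will establish the two inclusions between ${\rm Aff}\, S$ and the set $A := \{\sum_{i=1}^n a_i v_i \mid a_i \in \R,\ \sum_i a_i = 1\}$. An arbitrary element of ${\rm Aff}\, S$ has the form $v_1 + \sum_{i=2}^n b_i(v_i - v_1)$, which after collecting coefficients equals $(1 - \sum_{i=2}^n b_i) v_1 + \sum_{i=2}^n b_i v_i$ and hence lies in $A$. Conversely, given any $\sum_{i=1}^n a_i v_i$ with $\sum_i a_i = 1$, I rewrite it as $v_1 + \sum_{i=2}^n a_i (v_i - v_1)$ using $a_1 = 1 - \sum_{i=2}^n a_i$, which exhibits it as an element of ${\rm Aff}\, S$.

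For part (2), I set $W = {\rm Span}\{v_i - v_1\}_{i=1,\ldots,n}$ so that ${\rm Aff}\, S = v_1 + W$ and hence $\dim {\rm Aff}\, S = \dim W$ by the definition of the dimension of an affine subspace. The preceding paragraph also records ${\rm Span}\, S = \R v_1 + W$, so $\dim {\rm Span}\, S \leq \dim W + 1 = \dim {\rm Aff}\, S + 1$, with equality precisely when $v_1 \notin W$. Rearranging yields the claimed inequality.

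No step presents a genuine obstacle: the proposition is essentially a restatement of definitions bookkept through the standard affine/linear span dichotomy, and the only mild care required is the coefficient manipulation in part (1).
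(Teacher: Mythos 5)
Your proof is correct and follows exactly the route the paper intends: the paper offers no separate proof beyond the two identities ${\rm Aff}\ S = v + {\rm Span}\{u-v\}_{u\in S}$ and $\R v + {\rm Span}\{u-v\}_{u\in S} = {\rm Span}\ S$ stated in the preceding paragraph, and your argument is precisely the unwinding of those identities (coefficient bookkeeping for part (1), the dimension count $\dim{\rm Span}\ S \le \dim W + 1$ for part (2)). Nothing further is needed.
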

\begin{df}
An affine subspace $A$ of an inner product space $(V, \langle-, -\rangle)$ is called {\it non-degenerate} if it is expressed as $A = v + W$ for a vector $v \in V$ and a non-degenerate linear subspace $W \subset V$. 
\end{df}
\begin{df}
    Let $(V, \langle-, -\rangle)$ be an inner product space, and let $A = v + W$ be an affine subspace of $V$. A vector $u \in V$ is said to be {\it perpendicular to} $A$ if $\langle u, W\rangle = 0$.
\end{df}

\subsection{Quasi-sphere}
\begin{df}
For  $0 \neq K\in \R$, the submanifold 
\[
\{v \in \R^{p, q} \mid \langle v, v \rangle_{p, q} = K\}
\]
of $\R^{p+q}$ considered as a pseudo-Riemannian manifold with the metric induced from $\langle -, - \rangle_{p, q}$ is called {\it a quasi-sphere (with its radial scalar square $K$)}, which is denoted by $S_{p, q}(K)$.
\end{df}
The following is well-known and standard.
\begin{prop}
If the quasi-sphere $S_{p, q}(K)$ for $0 \neq K \in \R$ is not empty, it has a constant sectional curvature $K^{-1}$ as a pseudo-Riemannian manifold.
\end{prop}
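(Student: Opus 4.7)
The plan is to apply the Gauss equation for a pseudo-Riemannian hypersurface inside the flat pseudo-Euclidean ambient space $\R^{p,q}$, a direct analogue of the classical computation for the round sphere in Euclidean space. First I would verify that $S_{p,q}(K)$ is a genuine pseudo-Riemannian manifold: at a point $v \in S_{p,q}(K)$ the tangent space is $T_v S_{p,q}(K) = \{v\}^\perp$, and since $\langle v, v\rangle_{p,q} = K \neq 0$ the one-dimensional subspace $\R v$ is non-degenerate. Proposition \ref{perp} then ensures that $\{v\}^\perp$ is non-degenerate as well, so the restriction of $\langle-,-\rangle_{p,q}$ endows $S_{p,q}(K)$ with a non-degenerate pseudo-Riemannian metric, and the position vector field spans the normal line at each point.

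The next step is to compute the second fundamental form, for which I would use the position vector field $P\colon v\mapsto v$ as a (non-unit) normal vector field with $\langle P, P\rangle_{p,q} = K$. Letting $\bar{\nabla}$ denote the flat Levi-Civita connection on $\R^{p,q}$, one has $\bar{\nabla}_X P = X$ for every tangent vector $X$ because $P$ is the identity map. Differentiating the identity $\langle Y, P\rangle_{p,q} = 0$ along a tangent vector field $Y$ and combining this with the Gauss formula $\bar{\nabla}_X Y = \nabla_X Y + II(X,Y)$ yields $\langle II(X,Y), P\rangle_{p,q} = -\langle X, Y\rangle_{p,q}$, hence $II(X,Y) = -K^{-1}\langle X, Y\rangle_{p,q}\, P$.

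Finally, I would invoke the Gauss equation: since the ambient curvature vanishes, for tangent vectors $X, Y, Z, W$ at a point of $S_{p,q}(K)$ we have
\[
\langle R(X,Y)Z, W\rangle_{p,q} = \langle II(X,Z), II(Y,W)\rangle_{p,q} - \langle II(X,W), II(Y,Z)\rangle_{p,q}.
\]
Substituting the expression for $II$ and using $\langle P, P\rangle_{p,q} = K$ collapses the right-hand side to $K^{-1}[\langle X,Z\rangle_{p,q}\langle Y,W\rangle_{p,q} - \langle X,W\rangle_{p,q}\langle Y,Z\rangle_{p,q}]$, which is exactly the curvature tensor of a pseudo-Riemannian manifold of constant sectional curvature $K^{-1}$. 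The main care required is with sign conventions in the Gauss formula and with the standard restriction that sectional curvature in the pseudo-Riemannian setting is defined only on non-degenerate tangent $2$-planes; on all such planes the computation above gives the uniform value $K^{-1}$, which completes the proof.
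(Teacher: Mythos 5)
The paper does not prove this proposition at all: it is introduced with the remark that it is ``well-known and standard,'' so there is no argument of the authors' to compare yours against. What you give is the standard textbook proof, and it is correct. The preliminary step is right: $K \neq 0$ makes every point of the level set a regular point of $v \mapsto \langle v,v\rangle_{p,q}$, the tangent space is $\{v\}^{\perp}$, and non-degeneracy of $\R v$ together with Proposition \ref{perp} gives non-degeneracy of the induced metric, with the position field $P$ spanning the normal line. The identities $\bar{\nabla}_X P = X$ and $\langle II(X,Y), P\rangle_{p,q} = -\langle X,Y\rangle_{p,q}$ correctly yield $II(X,Y) = -K^{-1}\langle X,Y\rangle_{p,q}P$. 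On the one genuinely delicate point, the signs: the version of the Gauss equation you wrote and the formula $\langle R(X,Y)Z,W\rangle_{p,q} = c\bigl(\langle X,Z\rangle_{p,q}\langle Y,W\rangle_{p,q} - \langle X,W\rangle_{p,q}\langle Y,Z\rangle_{p,q}\bigr)$ you match it against are the ones from O'Neill's semi-Riemannian conventions and are mutually consistent, so the constant comes out as $+K^{-1}$ as claimed (under the opposite curvature convention both formulas flip sign and the conclusion is unchanged); a reader checking against a differently-normalized source should just verify that the two conventions are paired. You also correctly flag that sectional curvature is only defined on non-degenerate tangent $2$-planes in the indefinite setting. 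In short: the proposal supplies a complete and correct proof of a fact the paper merely cites.
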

Proposition \ref{standard} implies the following.
\begin{prop}
Let $(\R^n, \langle -, -\rangle)$ be a non-degenerate inner product space of signature $(p, q)$. For $0 \neq K \in \R$ and $x \in \R^n$, the submanifold
\[
 \{v \in \R^n \mid \langle x-v, x-v\rangle = K\}
\]
of $\R^n$ considered as a pseudo-Riemannian manifold with the metric induced from $\langle -, - \rangle$ is isomorphic to the quasi-sphere $S_{p, q}(K)$ as a pseudo-Riemannian manifold.
\end{prop}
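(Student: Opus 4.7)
The plan is to reduce to the standard quasi-sphere $S_{p, q}(K) \subset \R^{p, q}$ by composing two isometries of the ambient pseudo-Riemannian manifold. First, the translation $T \colon \R^n \to \R^n$, $v \mapsto v - x$, is an isometry of $(\R^n, \langle -, -\rangle)$ viewed as a flat pseudo-Riemannian manifold, because under the canonical identification of each tangent space with $\R^n$ its differential is the identity. It sends the given level set $\{v \mid \langle x - v, x - v\rangle = K\}$ bijectively onto $\{w \mid \langle w, w\rangle = K\}$. Second, by Proposition \ref{standard} there is a linear isomorphism $\Phi \colon \R^n \to \R^{p+q}$ satisfying $\langle u, v\rangle = \langle \Phi u, \Phi v\rangle_{p, q}$; such a $\Phi$ is again an isometry of the flat pseudo-Riemannian structures and carries the translated set onto $S_{p, q}(K)$. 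The composite $\Phi \circ T$ will be the desired isomorphism.

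Before invoking pseudo-Riemannian language, I need to check that the set is a pseudo-Riemannian submanifold in the first place. Setting $f(v) = \langle v - x, v - x\rangle$, the differential at $v$ is the linear functional $w \mapsto 2\langle v - x, w\rangle$. On the level set $f = K \ne 0$ the vector $v - x$ is non-lightlike, hence non-zero, and non-degeneracy of $\langle -, -\rangle$ forces this functional to be non-zero; so $K$ is a regular value and the set is a smooth codimension-one submanifold with tangent space $T_v = (v - x)^\perp$. Since $\R(v - x)$ is a one-dimensional non-degenerate subspace, Proposition \ref{perp} gives that $(v - x)^\perp$ is non-degenerate as well, so the restricted form defines a genuine pseudo-Riemannian metric on the submanifold.

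With this set-up, the verification that $\Phi \circ T$ is an isometry of pseudo-Riemannian submanifolds is routine: both $T$ and $\Phi$ preserve the ambient inner product on tangent vectors, and hence also preserve its restriction to the tangent spaces of the corresponding level sets. The one mildly technical step is the non-degeneracy/regularity check in the second paragraph; the rest is a direct application of Proposition \ref{standard} together with the standard fact that translations of a pseudo-Euclidean space are isometries.
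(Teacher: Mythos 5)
Your proof is correct and follows exactly the route the paper intends: the paper states this proposition without a written proof, merely noting that it follows from Proposition \ref{standard}, and your translation-plus-linear-isometry argument is the natural way to flesh that out. The additional regular-value and non-degeneracy checks you include are sound and only make explicit what the paper leaves implicit.
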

\begin{df}
For the quasi-sphere $\{v \in \R^n \mid \langle x-v, x-v\rangle = K\}$ defined in a non-degenerate inner product space $(\R^n, \langle -, -\rangle)$ for some $K\neq 0$, we call $x \in \R^n$ {\it the center} of this quasi-sphere.
\end{df}

\begin{prop}\label{section}
Let $A$ be a non-degenerate $(p+q-1)$-dimensional affine subspace of $\R^{p, q}$. Then $A\cap S_{p, q}(1)$ is a submanifold of $A$, and it is isomorphic to a quasi-sphere as a pseudo-Riemannian manifold with the metric induced from $A$. In precise, we have the following : Let $A = v + W$ for a non-degenerate linear subspace  $W \subset \R^{p, q}$  and a non-lightlike vector $v \in \R^{p, q}$. We can assume that $\langle v, W\rangle_{p, q} = 0$ by Proposition \ref{lem:extendbasis}. Suppose that the signature of the inner product $\langle -, -\rangle_{p, q}$ restricted to $W$ is $(p', q')$. Then we have 
\[
A\cap S_{p, q}(1) \cong S_{p', q'}(1-\langle v, v\rangle_{p, q})
\]
as pseudo-Riemannian manifolds.
\end{prop}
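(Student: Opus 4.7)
The plan is to parametrize the intersection $A \cap S_{p,q}(1)$ using the decomposition $A = v + W$, reduce the defining equation to a quadratic condition on $W$, and then identify $W$ with the pseudo-Euclidean space $\R^{p', q'}$ via an orthonormal basis.

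First, because $v$ is perpendicular to $W$ by the assumption $\langle v, W\rangle_{p,q} = 0$, for any $w \in W$ the cross term vanishes:
\[
\langle v+w, v+w\rangle_{p,q} = \langle v, v\rangle_{p,q} + 2\langle v, w\rangle_{p,q} + \langle w, w\rangle_{p,q} = \langle v, v\rangle_{p,q} + \langle w, w\rangle_{p,q}.
\]
Hence $v+w \in S_{p,q}(1)$ if and only if $\langle w, w\rangle_{p,q} = 1 - \langle v, v\rangle_{p,q}$. This reduces the whole problem to describing the level set $\{w \in W \mid \langle w, w\rangle_{p,q} = 1-\langle v, v\rangle_{p,q}\}$ sitting inside $W$.

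Next, by Proposition \ref{prop:orthobasis} applied to the non-degenerate inner product space $(W, \langle -, -\rangle_{p,q}|_W)$ of signature $(p', q')$, choose an orthonormal basis of $W$. This produces a linear isometry $\varphi : W \to \R^{p', q'}$. Composing with the translation $w \mapsto v+w$ gives a diffeomorphism $\R^{p', q'} \to A$. Since the ambient inner product $\langle -, -\rangle_{p,q}$ is translation-invariant and the pseudo-Riemannian metric on $A$ is defined as the restriction of the ambient metric, this composite is in fact an isometry of pseudo-Riemannian manifolds. Under it, the level set above is transported to $\{x \in \R^{p', q'} \mid \langle x, x\rangle_{p', q'} = 1 - \langle v, v\rangle_{p,q}\}$, which is by definition $S_{p', q'}(1 - \langle v, v\rangle_{p,q})$. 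This is the desired isomorphism.

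The submanifold assertion will follow from the regular value theorem applied to the quadratic form $Q(x) = \langle x, x\rangle_{p', q'}$ on $\R^{p', q'}$: its differential at $x$ is $2\langle x, -\rangle_{p', q'}$, which is nonzero whenever $x \neq 0$ by the non-degeneracy of $\langle -, -\rangle_{p', q'}$, and every point of $Q^{-1}(K)$ with $K \neq 0$ satisfies $x \neq 0$. The only genuine subtlety — the main step to verify carefully — is that the translation $w \mapsto v+w$, combined with $\varphi$, gives an \emph{isometry} of pseudo-Riemannian manifolds, not merely a smooth bijection; this relies on the fact that the tangent space of $A$ at every point is canonically identified with $W$ and that the ambient bilinear form $\langle -, -\rangle_{p,q}$ is translation-invariant. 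Once this is pinned down, the rest is just the bookkeeping of the orthogonal decomposition.
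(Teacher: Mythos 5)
Your proposal is correct and follows essentially the same route as the paper: decompose $A = v + W$ with $v$ perpendicular to $W$, expand $\langle v+w, v+w\rangle_{p,q}$ so the cross term vanishes, and identify the resulting level set $\{w \in W \mid \langle w,w\rangle_{p,q} = 1-\langle v,v\rangle_{p,q}\}$ with $S_{p',q'}(1-\langle v,v\rangle_{p,q})$ via an orthonormal basis of $W$. The only difference is that you spell out the isometry and the regular-value argument explicitly, whereas the paper delegates these to the preceding propositions on quasi-spheres.
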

\begin{proof}
Let $A = v + W$ for a non-degenerate linear subspace  $W \subset \R^{p, q}$  and a non-lightlike vector $v \in \R^{p, q}$ with $\langle v, W\rangle_{p, q} = 0$. Since $\dim A = p+q-1$, we have $\R^{p+q} = \R v \oplus W$, and thus
\[
A =  \{x \in \R^{p, q} \mid  \langle v, x\rangle_{p, q} = \langle v, v\rangle_{p,q}\}.
\]
Hence we have
\begin{align*}
A\cap S_{p, q}(1) &= \{x \in \R^{p, q} \mid \langle x, x\rangle_{p, q} = 1, \langle v, x\rangle_{p, q} = \langle v, v\rangle_{p,q} \} \\
&= \{x \in A \mid \langle x-v, x-v\rangle_{p, q}  = 1 -\langle v, v\rangle_{p,q} \}.
\end{align*}
This completes the proof.
\end{proof}
Note that $A\cap S_{p, q}(1)$ is not isomorphic to $S_{p', q'}(1)$ since we have $\langle v, v\rangle_{p, q} \neq 0$.

\subsection{Magnitude}
We denote the vector $(1, \dots, 1)^t \in \R^n$ by $1_n$ in the following.
\begin{df}
Let $Z \in M_n(\R)$ be a symmetric matrix.
\begin{enumerate}
\item A vector $w \in \R^n$ is said to be {\it a magnitude weighting of } $Z$ if it satisfies $Zw = 1_n$. A magnitude weighting $w$ is {\it positive} if it satisfies $w_i>0$ for all $i$.
\item When $Z$ admits a magnitude weighting $w$, we define its {\it magnitude} by ${\rm Mag}\ Z = \sum_{i}w_i$. When $Z$ is non-degenerate, it always admits a magnitude weighting, and  it is equivalent to define ${\rm Mag}\ Z = \sum_{ij}(Z^{-1})_{ij}$.
\end{enumerate}
\end{df}
In the following, a metric space is said to be {\it finite} if it consists of finitely many points.
\begin{df}
Let $(X = \{x_1, \dots, x_n\}, d)$ be a finite metric space. 
\begin{enumerate}
\item We define a symmetric matrix $Z_X := (e^{-d(x_i, x_j)})_{ij} \in M_n(\R)$.
\item  {\it The magnitude of } $X$ is defined by ${\rm Mag}\ X = {\rm Mag}\ Z_X$ if it exists.
\item For $t> 0$, we define a metric space $tX := (X, td)$. We also define a metric space $\sqrt{X}:= (X, \sqrt{d})$.
\item $X$ is said to be {\it positive (semi-)definite} if $Z_X$ is a positive (semi-)definite matrix. Also, $X$ is said to be {\it stably positive (semi-)definite} if $Z_{tX}$ is a positive (semi-)definite matrix for all $t>0$.
\item $X$ is said to be {\it of negative type} if $\sqrt{X}$ can be embedded into a Euclidean space.
\end{enumerate}
\end{df}
\begin{prop}[\cite{M}]
For a finite metric space $X$, the following are equivalent.
\begin{enumerate}
\item $X$ is stably positive definite,
\item $X$ is stably positive semi-definite,
\item There is a sequence $\{t_i>0\}_i$ converging to $0$ as $i\to \infty$ such that $t_iX$ is positive definite for all $i$.
\item $X$ is of negative type.
\end{enumerate}
\end{prop}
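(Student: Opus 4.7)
The plan is to establish the cycle $(1) \Rightarrow (2) \Rightarrow (4) \Rightarrow (1)$, together with the trivial implication $(1) \Rightarrow (3)$ and an argument $(3) \Rightarrow (4)$ that runs in parallel with $(2) \Rightarrow (4)$. The implications $(1) \Rightarrow (2)$ and $(1) \Rightarrow (3)$ are immediate from the definitions: every positive definite matrix is positive semi-definite, and $(1)$ furnishes positive definiteness at every $t > 0$, in particular along any sequence $t_i \to 0^+$.

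The unified argument for $(2) \Rightarrow (4)$ and $(3) \Rightarrow (4)$ proceeds via a first-order Taylor expansion of $Z_{tX}$ at $t = 0$. For any $c = (c_1, \dots, c_n)^t \in \R^n$ satisfying $\sum_i c_i = 0$, the expansion $e^{-t d(x_i, x_j)} = 1 - t\, d(x_i, x_j) + O(t^2)$ gives
\[
c^t Z_{tX} c \;=\; \Bigl(\sum_i c_i\Bigr)^{\!2} - t\sum_{i,j} c_i c_j\, d(x_i, x_j) + O(t^2) \;=\; -t\sum_{i,j} c_i c_j\, d(x_i, x_j) + O(t^2).
\]
Under $(2)$, this quantity is nonnegative for every $t > 0$; under $(3)$, it is nonnegative along the sequence $t_i \to 0^+$. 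In either case, dividing by the positive parameter and passing to the limit forces $\sum_{i,j} c_i c_j\, d(x_i, x_j) \le 0$ for every $c$ with $\sum_i c_i = 0$, which is precisely the \emph{conditional negative definiteness} of $d$. By Schoenberg's classical theorem, this is equivalent to the existence of an isometric embedding of $(X, \sqrt{d}) = \sqrt{X}$ into some Euclidean space, i.e.\ to $(4)$.

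For the closing implication $(4) \Rightarrow (1)$, pick an isometric embedding $\sqrt{X} \hookrightarrow \R^k$ sending $x_i$ to $y_i$, so that $d(x_i, x_j) = \|y_i - y_j\|^2$. Then
\[
(Z_{tX})_{ij} \;=\; e^{-t\, d(x_i, x_j)} \;=\; e^{-t\|y_i - y_j\|^2}
\]
is the Gram matrix of the Gaussian kernel $K_t(y, y') = e^{-t\|y - y'\|^2}$ at the distinct points $y_1, \dots, y_n \in \R^k$. Since the Gaussian kernel is \emph{strictly} positive definite (via Bochner's theorem: $K_t$ is the Fourier transform of a Gaussian density on $\R^k$, which is positive and of full support), the resulting Gram matrix is positive definite for every $t > 0$, yielding $(1)$. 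The chain then closes, and combining it with $(1) \Rightarrow (3)$ completes the equivalence of all four conditions.

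The main potential obstacle is the clean invocation of Schoenberg's theorem in the direction ``conditionally negative definite $\Rightarrow$ $\sqrt{X}$ embeds in Euclidean space,'' which proceeds through the positive semi-definite square root of the centered Gram matrix $B_{ij} = \tfrac{1}{2}(d(x_1, x_i) + d(x_1, x_j) - d(x_i, x_j))$ (with $x_1$ a chosen basepoint); once this classical construction is recorded and cited, the remaining calculations in the argument are elementary.
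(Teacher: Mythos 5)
Your proposal is correct. Note that the paper does not actually prove this proposition: it is quoted verbatim from Meckes \cite{M} as background, so there is no in-paper argument to compare against. What you have written is essentially the classical route through Schoenberg's theorems \cite{sch1, sch2}, which is also how the cited literature handles it. The logical skeleton $(1)\Rightarrow(2)$, $(1)\Rightarrow(3)$, $(2)\Rightarrow(4)$, $(3)\Rightarrow(4)$, $(4)\Rightarrow(1)$ does close up to a full equivalence, the first-order expansion of $c^tZ_{tX}c$ on the hyperplane $\sum_i c_i = 0$ correctly extracts conditional negative definiteness of $d$ in the limit $t\to 0^+$ (and only needs the inequality along a null sequence, which is why $(3)$ suffices), and the return trip via the Gaussian kernel is sound. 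Two small points deserve the care you partly flagged: first, for $(4)\Rightarrow(1)$ you need \emph{strict} positive definiteness of the Gaussian kernel, not mere positive semi-definiteness, and this in turn requires the embedded points $y_1,\dots,y_n$ to be pairwise distinct --- which holds precisely because $d$ is a genuine metric, so $\lVert y_i-y_j\rVert^2=d(x_i,x_j)>0$ for $i\neq j$; it would be worth saying this explicitly. Second, the invocation of Schoenberg in the direction ``conditionally negative definite $\Rightarrow$ Euclidean embedding of $\sqrt{X}$'' is exactly the Young--Householder/Schoenberg construction you sketch at the end, and for a finite space the target can be taken to be $\R^{n-1}$, so no Hilbert-space subtleties arise. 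With those two remarks recorded, the argument is complete and self-contained modulo the cited classical facts.
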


\section{Main result}\label{sec:prf}
Let $Z \in M_n(\R)$ be a symmetric matrix with $Z_{ii} = 1$ for all $i$. Suppose that the signature of $Z$ is $(p, q, r)$. Since every real symmetric matrix can be diagonalized by an orthogonal matrix, there exist linearly independent vectors $v_1, \dots, v_n \in \R^n$ such that $Z = V^tI_{p, q, r}V$ with $V = (v_1\ \dots\ v_n)$. We define a projection $\pi^{p,q} : \R^n \to \R^{p+q}$ by $\pi^{p, q}(x_1, \dots, x_n)^t := (x_1, \dots, x_{p+q})^t $, and we denote $\pi^{p, q}v$ by $v^{p, q}$ for $v \in \R^n$. In this section, we give a proof for the following our main theorem.

\begin{thm}\label{main}
Let $Z \in M_n(\R)$ be a symmetric matrix with $Z_{ii} = 1$ for all $i$. Suppose that the signature of $Z$ is $(p, q, r)$ and let $v_1, \dots, v_n \in \R^n$ be linearly independent vectors such that $Z = V^tI_{p, q, r}V$ with $V = (v_1\ \dots\ v_n)$. If $Z$ admits a magnitude weighting and ${\rm Mag}\ Z \neq 0$, then we have
\[
{\rm Mag}\ Z = \frac{1}{1 - K^{-1}},
\]
where $K\neq 0$ is the sectional curvature of the quasi-sphere ${\rm Aff}\{v_1^{p, q}, \dots, v_n^{p, q}\}\cap S_{p, q}(1)$. Furthermore, for the center $V^{p, q}a \in {\rm Aff}\{v_1^{p, q}, \dots, v_n^{p, q}\}, \sum_ia_i=1$ of the quasi-sphere ${\rm Aff}\{v_1^{p, q}, \dots, v_n^{p, q}\}\cap S_{p, q}(1)$, a magnitude weighting $w$ of $Z$ is described as
\[
w = \frac{1}{1-K^{-1}}a =  \frac{1}{\langle V^{p, q}a, V^{p, q}a\rangle_{p, q}}a = \frac{1}{\langle a, a\rangle_{Z}}a,
\]
where $V^{p, q} = (v_1^{p, q}\  \dots\ v^{p, q}_n)$. In particular, the following are equivalent: 
\begin{enumerate}
\item $Z$ admits a positive weighting, 
\item $Z$ is spacelike, and the center of the quasi-sphere ${\rm Aff}\{v_1^{p, q}, \dots, v_n^{p, q}\}\cap S_{p, q}(1)$ belongs to the interior of the convex hull ${\rm Conv}\{v_1^{p, q}, \dots, v_n^{p, q}\}$ of the points $v_1^{p, q}, \dots, v_n^{p, q}$,
\end{enumerate}
where we refer to Definition \ref{causal} for the term `spacelike'.
\end{thm}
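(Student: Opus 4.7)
First, observe that $Z=(V^{p,q})^tI_{p,q}V^{p,q}$ since the last $r$ rows and columns of $I_{p,q,r}$ vanish, so $Z_{ij}=\langle v_i^{p,q},v_j^{p,q}\rangle_{p,q}$ and $Z_{ii}=1$ puts every $v_i^{p,q}$ on the quasi-sphere $S_{p,q}(1)$. The plan is to exploit $Zw=1_n$ via this bilinear interpretation: setting $a=w/{\rm Mag}\ Z$, we have $\sum_i a_i=1$, so $V^{p,q}a\in A:={\rm Aff}\{v_1^{p,q},\dots,v_n^{p,q}\}$. Reading $Zw=1_n$ entry-wise yields $\langle v_j^{p,q},V^{p,q}a\rangle_{p,q}=1/{\rm Mag}\ Z$ for all $j$; being constant in $j$, this says $V^{p,q}a$ is perpendicular to the direction space $W={\rm Span}\{v_i^{p,q}-v_j^{p,q}\}$ of $A$.

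Next, the computation $\langle V^{p,q}a,V^{p,q}a\rangle_{p,q}=a^tZa=a^t(1_n/{\rm Mag}\ Z)=1/{\rm Mag}\ Z\neq 0$ shows $V^{p,q}a$ is non-lightlike. By Proposition \ref{codim1} this makes $W$ non-degenerate and forces $\dim A=p+q-1$ (otherwise $W=\R^{p+q}$, and the perpendicularity would give $V^{p,q}a=0$, contradicting non-lightlikeness). Proposition \ref{section} then identifies $V^{p,q}a$ as the center of $A\cap S_{p,q}(1)$ with radial scalar square $1-\langle V^{p,q}a,V^{p,q}a\rangle_{p,q}=1-1/{\rm Mag}\ Z$. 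Since the sectional curvature is the reciprocal of the radial scalar square, $K^{-1}=1-1/{\rm Mag}\ Z$, giving ${\rm Mag}\ Z=1/(1-K^{-1})$; the three displayed weighting formulas are rewritings of $w={\rm Mag}(Z)\cdot a$ via the identities above.

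For the positive-weighting equivalence, $w_i={\rm Mag}(Z)\,a_i>0$ for all $i$ requires ${\rm Mag}(Z)$ and every $a_i$ to share a common sign, which must be positive because $\sum a_i=1$. Hence $w$ is positive iff $a_i>0$ for all $i$ and ${\rm Mag}(Z)>0$. The first condition, together with $\sum a_i=1$, says exactly that $V^{p,q}a$ lies in the interior of ${\rm Conv}\{v_1^{p,q},\dots,v_n^{p,q}\}$; the second, via ${\rm Mag}(Z)=1/\langle V^{p,q}a,V^{p,q}a\rangle_{p,q}$, is the spacelikeness of the center, matching ``$Z$ is spacelike'' in the sense of Definition \ref{causal}.

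The main technical hurdle is ensuring that Proposition \ref{section} applies in this possibly indefinite/degenerate setting, namely that $A$ has dimension $p+q-1$ and is non-degenerate; both follow from the single observation that $V^{p,q}a$ is non-lightlike with scalar square $1/{\rm Mag}\ Z\neq 0$. A secondary issue when $r>0$: the weighting $w$ is not unique, but the point $V^{p,q}a$ is well-defined, because any two weightings differ by an element of $\ker Z$, and $(V^{p,q})^tI_{p,q}$ being injective on $\R^{p+q}$ forces $V^{p,q}(w-w')=0$ whenever $Z(w-w')=0$.
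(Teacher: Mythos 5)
Your proof is correct and follows essentially the same route as the paper's: both hinge on Proposition \ref{section} together with the observation that $Zw=1_n$ is precisely the condition that $V^{p,q}a$ (with $a$ proportional to $w$) is equidistant from the $v_i^{p,q}$, with $\langle a,a\rangle_Z = 1/{\rm Mag}\ Z$. The only real difference is direction: the paper starts from the circumcenter and recovers a weighting, using Propositions \ref{weighting} and \ref{mag0} to secure the dimension and non-degeneracy of the affine span, whereas you start from a given weighting and identify its image as the circumcenter, deriving those same non-degeneracy facts directly from the non-lightlikeness of $V^{p,q}a$ --- a legitimate and slightly more self-contained shortcut that also spells out the positive-weighting equivalence, which the paper leaves implicit.
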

Note that the sectional curvature of a sphere of radius $R$ is $R^{-2}$, and hence Theorem \ref{main} implies Theorem \ref{thm1}.
We start from the characterization of the existence of a magnitude weighting by using $V$.
\begin{prop}\label{weighting}
The following are equivalent.
\begin{enumerate}
\item $0\not\in{\rm Aff}\{v_1^{p, q}, \dots, v_n^{p, q}\} \subset \R^{p, q}$,
\item There is a non-zero vector $w \in \R^{p, q}$ that is perpendicular to ${\rm Aff}\{v_1^{p, q}, \dots, v_n^{p, q}\}$,
\item $Z$ admits a magnitude weighting.
\end{enumerate}
\end{prop}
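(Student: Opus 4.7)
The plan is to translate each of (1), (2), (3) into a linear-algebraic condition on the auxiliary matrix $V^{p,q} = (v_1^{p,q}\ \dots\ v_n^{p,q})$, whose columns span $\mathbb{R}^{p+q}$ because $V$ is invertible and $\pi^{p,q}$ is surjective. The central technical observation I would establish first is the identity $\ker Z = \ker V^{p,q}$: since $V$ and $V^t$ are invertible, $Zc = V^t I_{p,q,r} V c = 0$ is equivalent to $I_{p,q,r} V c = 0$, which in turn says exactly that the first $p+q$ coordinates of $Vc$ vanish.

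From here I would prove (1) $\Leftrightarrow$ (3) as follows. A magnitude weighting is a solution of $Zw = 1_n$, so (3) is $1_n \in \mathrm{Im}(Z)$; by symmetry of $Z$ this image coincides with $(\ker Z)^{\perp}$ in the standard Euclidean sense, and using $\ker Z = \ker V^{p,q}$ it becomes the condition that $\sum_i c_i = 0$ for every $c$ with $V^{p,q} c = 0$. The negation of (3) is therefore the existence of $c$ with $V^{p,q}c = 0$ and $\sum_i c_i \neq 0$; after rescaling so that $\sum_i c_i = 1$ and invoking Proposition \ref{affine}(1), this is precisely $0 \in {\rm Aff}\{v_1^{p,q}, \dots, v_n^{p,q}\}$, namely the negation of (1).

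For (1) $\Leftrightarrow$ (2) I would use the spanning property recorded above. Writing $A = {\rm Aff}\{v_i^{p,q}\}$ and letting $W$ be its directional subspace, the condition $0 \in A$ is equivalent to $A$ being a linear subspace containing the spanning set $\{v_1^{p,q}, \dots, v_n^{p,q}\}$ of $\mathbb{R}^{p,q}$, hence to $A = \mathbb{R}^{p,q}$, hence to $W = \mathbb{R}^{p,q}$. Since $\mathbb{R}^{p,q}$ is non-degenerate, Proposition \ref{choco} supplies a non-zero vector perpendicular to $W$ (equivalently, to $A$) exactly when $W \subsetneq \mathbb{R}^{p,q}$, which matches the negation of $0 \in A$.

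The main obstacle is setting up the identity $\ker Z = \ker V^{p,q}$; once that is in place, both equivalences are formal. A secondary point to be careful about is that the invertibility of $V$ is genuinely used in the argument for (1) $\Leftrightarrow$ (2) to force $\mathrm{Span}\{v_i^{p,q}\} = \mathbb{R}^{p,q}$, without which (2) could hold while $0$ still lies in the affine span, since $A$ might then be a proper linear subspace of $\mathbb{R}^{p,q}$.
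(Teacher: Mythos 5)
Your proof is correct, but it reaches the equivalence with (3) by a genuinely different route than the paper. The paper never isolates $\ker Z$; instead it characterizes $0\notin{\rm Aff}\{v_i^{p,q}\}$ by the dimension count $\dim{\rm Aff}\{v_i^{p,q}\}=p+q-1$, produces a perpendicular vector $w$ via Proposition \ref{choco}, and then runs a chain of equivalences translating ``$\tilde w$ is perpendicular to the affine span and $\langle\tilde w,v_i\rangle_{p,q,r}\neq 0$'' into $V^tI_{p,q,r}\tilde w=c1_n$ with $c\neq 0$, whence $Zw'=c1_n$ after the substitution $\tilde w=Vw'$. In other words, the paper derives (3) \emph{from} (2), and the perpendicular vector literally becomes the weighting up to rescaling --- a correspondence that is reused in Proposition \ref{mag0} and in the proof of Theorem \ref{main}, where the center of the quasi-sphere is converted into an explicit weighting. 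You instead prove (1)$\Leftrightarrow$(3) directly via the identity $\ker Z=\ker V^{p,q}$ and the symmetric-matrix duality ${\rm Im}(Z)=(\ker Z)^{\perp}$, which is arguably cleaner for the bare existence statement and nicely exposes why the obstruction to a weighting is a kernel vector whose coordinates do not sum to zero; your (1)$\Leftrightarrow$(2) via ``$0\in A$ forces $A=\R^{p,q}$'' is essentially the paper's dimension argument in a slightly different dress. What your route gives up is the explicit dictionary between perpendicular vectors and weightings that the paper leans on later; what it buys is a shorter, more structural argument that avoids the long chain of biconditionals. Both uses of Proposition \ref{choco} and of the non-degeneracy of $\R^{p,q}$ are sound, and your preliminary observation that $V^{p,q}=\pi^{p,q}V$ has full row rank is exactly the surjectivity fact the paper records at the start of its proof.
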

\begin{proof}
Since the projection $\pi^{p, q}$ is surjective, we have
\[
\R^{p, q} = {\rm Span}\{v_1^{p, q}, \dots, v_n^{p, q}\} = \R v_i + {\rm Span}\{v_2^{p, q}-v_1^{p, q}, \dots, v_n^{p, q}-v_1^{p, q}\},
\]
for all $i$. Note that $0\in{\rm Aff}\{v_1^{p, q}, \dots, v_n^{p, q}\}$ is equivalent to that $v_i \in {\rm Span}\{v_2^{p, q}-v_1^{p, q}, \dots, v_n^{p, q}-v_1^{p, q}\}$ for all $i$. Hence we obtain that
\begin{align*}
&0\in{\rm Aff}\{v_1^{p, q}, \dots, v_n^{p, q}\} \\
& \Leftrightarrow {\rm Span}\{v_2^{p, q}-v_1^{p, q}, \dots, v_n^{p, q}-v_1^{p, q}\} = \R^{p, q} \\
& \Leftrightarrow \dim {\rm Span}\{v_2^{p, q}-v_1^{p, q}, \dots, v_n^{p, q}-v_1^{p, q}\} = p+q.
\end{align*}
Note that we have $\dim {\rm Aff}\{v_1^{p, q}, \dots, v_n^{p, q}\} \geq p+q-1$ by Proposition \ref{affine}. If $\dim {\rm Aff}\{v_1^{p, q}, \dots, v_n^{p, q}\} = p+q-1$, there exists a non-zero vector $w  \in \R^{p, q}$ that is perpendicular to the affine subspace ${\rm Aff}\{v_1^{p, q}, \dots, v_n^{p, q}\}$ by Proposition \ref{choco}. Conversely, if there exists such a non-zero vector $w  \in \R^{p, q}$, then we have $\dim {\rm Aff}\{v_1^{p, q}, \dots, v_n^{p, q}\} \leq \dim {\rm ker}\langle w, -\rangle_{p, q} = p+q-1$. Also, the non-degeneracy of $\R^{p, q}$ implies that $w \neq 0$ is equivalent to $\langle w, v_i^{p, q}\rangle_{p, q} \neq 0$ for all $i$. Namely, we obtain
\begin{align*}
&\dim {\rm Aff}\{v_1^{p, q}, \dots, v_n^{p, q}\} = p+q-1 \\
&\Leftrightarrow \exists w \in \R^{p, q}, \langle w, {\rm Span}\{v_2^{p, q}-v_1^{p, q}, \dots, v_n^{p, q}-v_1^{p, q}\}\rangle_{p, q} = 0, w\neq 0 \\
&\Leftrightarrow \exists w \in \R^{p, q}, \langle w, {\rm Span}\{v_2^{p, q}-v_1^{p, q}, \dots, v_n^{p, q}-v_1^{p, q}\}\rangle_{p, q} = 0, \forall i, \langle w, v_i^{p, q}\rangle_{p, q} \neq 0.
\end{align*}

Now we have 
\begin{align*}
& 0 \not\in {\rm Aff}\{v_1^{p, q}, \dots, v_n^{p, q}\}\\
&\Leftrightarrow \dim {\rm Aff}\{v_1^{p, q}, \dots, v_n^{p, q}\} = p+q-1\\
&\Leftrightarrow \exists w \in \R^{p, q}, \langle w, {\rm Span}\{v_2^{p, q}-v_1^{p, q}, \dots, v_n^{p, q}-v_1^{p, q}\}\rangle_{p, q} = 0 \text{ and }\forall i, \langle w, v_i^{p, q}\rangle_{p, q} \neq 0 \\
&\Leftrightarrow \exists \tilde{w} \in \R^{n}, \langle \tilde{w}, {\rm Span}\{v_2-v_1, \dots, v_n-v_1\}\rangle_{p, q, r} = 0 \text{ and }\forall i, \langle \tilde{w}, v_i\rangle_{p, q, r} \neq 0 \\
&\Leftrightarrow \exists \tilde{w}\in \R^{n}, \forall i, \langle \tilde{w}, v_i-v_1\rangle_{p, q, r} = 0,\langle \tilde{w}, v_i\rangle_{p, q, r} \neq 0 \\
&\Leftrightarrow \exists \tilde{w}\in \R^{n}, \exists c\neq 0\in \R, V^tI_{p, q, r}\tilde{w} = c1_n \\
&\Leftrightarrow \exists w'\in \R^{n}, \exists c\neq 0\in \R, V^tI_{p, q, r}Vw' = Zw' = c1_n \\
&\Leftrightarrow \exists w''\in \R^{n},  Zw'' = 1_n \\
&\Leftrightarrow Z\text{ admits a magnitude weighting}.
\end{align*}
This completes the proof.
\end{proof}

\begin{prop}\label{mag0}
Suppose that $Z$ admits a weighting. Then the following are equivalent.
\begin{enumerate}
\item The affine subspace ${\rm Aff}\{v_1^{p, q}, \dots, v_n^{p, q}\}$ is non-degenerate,
\item Every non-zero vector perpendicular to ${\rm Aff}\{v_1^{p, q}, \dots, v_n^{p, q}\}$ is non-lightlike,
\item ${\rm Mag}\ Z \neq 0$.
\end{enumerate}
\end{prop}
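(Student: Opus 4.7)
The plan is to reduce all three conditions to a statement about a single one-dimensional subspace of $\R^{p,q}$. From Proposition \ref{weighting}, the existence of a magnitude weighting is equivalent to $0 \notin {\rm Aff}\{v_1^{p,q}, \dots, v_n^{p,q}\}$, which (combined with Proposition \ref{affine} applied to the spanning set $\{v_i^{p,q}\}$, whose linear span is all of $\R^{p,q}$) forces this affine span to have dimension exactly $p+q-1$. Write $W := {\rm Span}\{v_i^{p,q}-v_1^{p,q}\}_{i\geq 2} \subset \R^{p,q}$ for its direction subspace; then $\dim W = p+q-1$, and since $\R^{p,q}$ is non-degenerate, its orthogonal complement $W^\perp$ is one-dimensional.

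For (1) $\Leftrightarrow$ (2), I would apply Proposition \ref{codim1} to $W$ inside $\R^{p,q}$: the affine subspace is non-degenerate by definition iff $W$ is, iff $W^\perp$ contains a non-lightlike vector. Since $W^\perp$ is one-dimensional, every non-zero element of it is a scalar multiple of any other, so this is equivalent to every non-zero element of $W^\perp$ being non-lightlike, which is exactly (2) (noting that a vector is perpendicular to ${\rm Aff}\{v_1^{p,q}, \dots, v_n^{p,q}\}$ iff it lies in $W^\perp$).

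For (2) $\Leftrightarrow$ (3), I would take a magnitude weighting $w \in \R^n$, so that $Zw = 1_n$, and compute
\[
{\rm Mag}\ Z \;=\; 1_n^t w \;=\; w^t Z w \;=\; (Vw)^t I_{p,q,r} (Vw) \;=\; \langle V^{p,q}w,\, V^{p,q}w\rangle_{p,q},
\]
where the last $r$ coordinates of $Vw$ drop out of the pairing with $I_{p,q,r}$, and $V^{p,q}w = \sum_i w_i v_i^{p,q}$. Next I would verify that $V^{p,q}w$ is a non-zero element of $W^\perp$: perpendicularity to $W$ comes from $\langle V^{p,q}w,\, v_i^{p,q}-v_1^{p,q}\rangle_{p,q} = (Zw)_i - (Zw)_1 = 0$, and non-vanishing from $\langle V^{p,q}w,\, v_i^{p,q}\rangle_{p,q} = (Zw)_i = 1 \neq 0$. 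Since $W^\perp$ is one-dimensional, $V^{p,q}w$ generates it, so ${\rm Mag}\ Z \neq 0$ is equivalent to this generator being non-lightlike, which is (2).

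The only real hurdle is bookkeeping: juggling the projection $\pi^{p,q}$ together with the possibly degenerate form $I_{p,q,r}$ to extract a vector in $\R^{p,q}$ whose geometry simultaneously governs both the (non-)degeneracy of the affine span and the (non-)vanishing of ${\rm Mag}\ Z$. Once the identity ${\rm Mag}\ Z = \langle V^{p,q}w,\, V^{p,q}w\rangle_{p,q}$ is in hand, both equivalences are immediate from the one-dimensionality of $W^\perp$ and Proposition \ref{codim1}.
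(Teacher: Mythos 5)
Your proposal is correct and follows essentially the same route as the paper: Proposition \ref{weighting} pins the affine span at dimension $p+q-1$, Proposition \ref{codim1} together with the one-dimensionality of the perpendicular line gives (1) $\Leftrightarrow$ (2), and the identification of that line as $\R\, V^{p,q}w$ with $\langle V^{p,q}w, V^{p,q}w\rangle_{p,q} = w^tZw = {\rm Mag}\ Z$ gives (2) $\Leftrightarrow$ (3). The paper packages this last step as a chain of existence equivalences for solutions of $Zw' = c\,1_n$ rather than fixing the weighting and computing directly, but the mathematical content is identical.
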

\begin{proof}
Since we have $\dim {\rm Aff}\{v_1^{p, q}, \dots, v_n^{p, q}\} = p+q-1$ by Proposition \ref{weighting},  ${\rm Aff}\{v_1^{p, q}, \dots, v_n^{p, q}\}$ is non-degenerate if and only if there exists a non-lightlike vector $w \in \R^{p, q}$ that is perpendicular to the affine subspace ${\rm Aff}\{v_1^{p, q}, \dots, v_n^{p, q}\}$, by Proposition \ref{codim1}. Now we have
\begin{align*}
&\exists w  \in \R^{p, q}, \text{ non-lightlike and perpendicular to }{\rm Aff}\{v_1^{p, q}, \dots, v_n^{p, q}\}\\
&\Leftrightarrow \exists w  \in \R^{p, q}, \langle w, w\rangle_{p, q} \neq 0, \langle w, {\rm Span}\{v_2^{p, q}-v_1^{p, q}, \dots, v_n^{p, q}-v_1^{p, q}\}\rangle_{p, q} = 0 \\
&\Leftrightarrow  \exists \tilde{w}  \in \R^{n}, \langle \tilde{w}, \tilde{w}\rangle_{p, q, r} \neq 0, \langle  \tilde{w}, {\rm Span}\{v_2-v_1, \dots, v_n-v_1\}\rangle_{p, q, r} = 0 \\
&\Leftrightarrow \exists  \tilde{w}\in \R^{n}, \langle  \tilde{w},  \tilde{w}\rangle_{p, q, r} \neq 0, \forall i, \langle v_i-v_1,  \tilde{w}\rangle_{p, q, r} = 0 \\
&\Leftrightarrow \exists \tilde{w} \in \R^{n}, \exists c\in \R, V^tI_{p, q, r}\tilde{w} = c1_n, \langle \tilde{w}, \tilde{w}\rangle_{p, q, r} \neq 0 \\
&\Leftrightarrow \exists w'\in \R^{n}, \exists c\in \R, V^tI_{p, q, r}Vw' = Zw' = c1_n, \langle w', w'\rangle_{Z} \neq 0 \\
&\Leftrightarrow \exists w'\in \R^{n},  Zw' = \frac{\langle w', w'\rangle_{Z}}{\sum_iw'_i}1_n, \sum_iw'_i\neq 0, \langle w', w'\rangle_{Z} \neq 0 \\
&\Leftrightarrow \exists w''\in \R^{n},  Zw'' = 1_n, \sum_iw''_i\neq 0 \\
&\Leftrightarrow {\rm Mag}\ Z \neq 0.
\end{align*}
This completes the proof.
\end{proof}

\begin{df}\label{causal}
Let $Z \in M_n(\R)$ be a symmetric matrix with $Z_{ii} = 1$ for all $i$ that admits a magnitude weighting. Choosing linearly independent vectors  $v_1, \dots, v_n \in \R^n$ satisfying $Z = V^tI_{p, q, r}V$ with $V = (v_1\ \dots\ v_n)$, we call  $Z$ {\it spacelike, timelike} or {\it lightlike} if every non-zero vector perpendicular to ${\rm Aff}\{v_1^{p, q}, \dots, v_n^{p, q}\}$ is spacelike, timelike or lightlike respectively. 
\end{df}
\begin{rem}
Note that Proposition \ref{mag0} implies that the following are equivalent: 
\begin{enumerate}
\item $Z$ is lightlike,
\item The affine subspace ${\rm Aff}\{v_1^{p, q}, \dots, v_n^{p, q}\}$ is degenerate,
\item ${\rm Mag}\ Z = 0$.
\end{enumerate}
Also, if $Z$ is not lightlike, the non-zero vector perpendicular to ${\rm Aff}\{v_1^{p, q}, \dots, v_n^{p, q}\}$ is determined uniquely up to a scalar multiplication.
\end{rem}
\begin{rem}
The above causal classification does not depend on the choice of $V$ since the signature of ${\rm Aff}\{v_1^{p, q}, \dots, v_n^{p, q}\}$  depends only on the inner products of $v_i$'s, which are the components of $Z$. 
\end{rem}

\begin{proof}[Proof of Theorem \ref{main}]
By Proposition \ref{mag0}, the affine subspace $ {\rm Aff}\{v_1^{p, q}, \dots, v_n^{p, q}\}$ is non-degenerate and its dimension is $p+q-1$. Also, by Proposition \ref{section}, there exists $K\neq 0$ such that ${\rm Aff}\{v_1^{p, q}, \dots, v_n^{p, q}\}\cap S_{p, q}(1) \cong S_{p', q'}(K^{-1})$. Hence there exists a vector $x \in {\rm Aff}\{v_1^{p, q}, \dots, v_n^{p, q}\}$ satisfying that 
\[
{\rm Aff}\{v_1^{p, q}, \dots, v_n^{p, q}\}\cap S_{p, q}(1) = \{v \in {\rm Aff}\{v_1^{p, q}, \dots, v_n^{p, q}\} \mid \langle x-v, x-v\rangle_{p, q} = K^{-1}\}.
\]
In particular, we have $\langle x-v_i^{p, q}, x-v_i^{p, q}\rangle_{p, q} = \langle x-v_j^{p, q}, x-v_j^{p, q} \rangle_{p, q}  = K^{-1}$ for all $1\leq i, j \leq n$. Since we have $x \in {\rm Aff}\{v_1^{p, q}, \dots, v_n^{p, q}\}$, there exists a vector $a = (a_1, \dots, a_{n})^t$ such that $\sum_ia_i=1$ and $x = V^{p, q}a$. Then we have $x-v_i^{p, q} = V^{p, q}a-V^{p, q}e_i = V^{p, q}(a-e_i)$, and hence
\begin{align*}
&1\leq \forall i, j \leq n, \langle x-v_i^{p, q}, x-v_i^{p, q}\rangle_{p, q} = \langle x-v_j^{p, q}, x-v_j^{p, q} \rangle_{p, q}, \\
&\Leftrightarrow 1\leq \forall i, j \leq n, \langle V^{p, q}(a-e_i), V^{p, q}(a-e_i)\rangle_{p, q} = \langle V^{p, q}(a-e_j), V^{p, q}(a-e_j)\rangle_{p, q}, \\
&\Leftrightarrow 1\leq \forall i, j \leq n, \langle V(a-e_i), V(a-e_i)\rangle_{p, q, r} = \langle V(a-e_j), V(a-e_j)\rangle_{p, q, r}, \\
&\Leftrightarrow 1\leq \forall i, j \leq n, \langle a-e_i, a-e_i\rangle_{Z} = \langle a-e_j, a-e_j\rangle_{Z}, \\
&\Leftrightarrow 1\leq \forall i, j \leq n, \langle a, a\rangle_{Z}-2\langle e_i, a\rangle_{Z}+\langle e_i, e_i\rangle_{Z}  = \langle a, a\rangle_{Z}-2\langle e_j, a\rangle_{Z}+\langle e_j, e_j\rangle_{Z},  \\
&\Leftrightarrow 1\leq \forall i, j \leq n, \langle e_i, a\rangle_{Z} = \langle e_j, a\rangle_{Z}, \\
&\Leftrightarrow \exists c \in \R, Za = c\cdot 1_n.
\end{align*}
 Since $Za = c\cdot 1_n$ implies $\langle a, e_i\rangle_{Z} = \langle a, a\rangle_{Z} = c$, we obtain
 \[
K^{-1} = \langle x-v_i, x-v_i\rangle_{p, q} = \langle a-e_i, a-e_i\rangle_{Z} = \langle a, a\rangle_{Z} -2\langle a, e_i\rangle_{Z} + 1 = 1-c,
\]
which implies $c = 1-K^{-1}$. Note that we have $c \neq 0$ by Proposition \ref{section}.  Hence $Za = c\cdot 1_n$ implies that $a/c$ is a magnitude weighting of $Z$, and we obtain
\[
{\rm Mag}\ Z = c^{-1} =  \frac{1}{1 - K^{-1}}.
\]
\end{proof}

\begin{cor}
We have 
\[
\begin{cases} Z\text{ is spacelike } \Leftrightarrow {\rm Mag}\ Z > 0 \\
Z\text{ is timelike } \Leftrightarrow {\rm Mag}\ Z < 0 \\
Z\text{ is lightlike } \Leftrightarrow {\rm Mag}\ Z = 0.
\end{cases}
\]
\end{cor}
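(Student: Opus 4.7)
The plan is to read off the sign of ${\rm Mag}\ Z$ directly from the formula in Theorem \ref{main}, by rewriting the sectional curvature $K$ as $K^{-1} = 1 - \langle v, v \rangle_{p,q}$ for a single distinguished perpendicular vector $v$ whose causal type is that of $Z$. The lightlike case is free: the remark following Proposition \ref{mag0} already records the equivalence $Z$ lightlike $\Leftrightarrow {\rm Mag}\ Z = 0$. So from now on I may assume $Z$ is not lightlike, which by Proposition \ref{mag0} makes $A := {\rm Aff}\{v_1^{p,q}, \ldots, v_n^{p,q}\}$ a non-degenerate affine subspace.

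Write $A$ in the form $A = v + W$ used in Proposition \ref{section}: $W$ is the non-degenerate underlying linear subspace of dimension $p+q-1$ (by Proposition \ref{weighting}), and $v \in A$ is chosen so that $\langle v, W \rangle_{p,q} = 0$. Since $\dim W^\perp = 1$ and $0 \notin A$ by Proposition \ref{weighting}, $v$ is non-zero, and every non-zero vector perpendicular to $A$ is a non-zero scalar multiple of $v$. Consequently the causal type of $Z$ in the sense of Definition \ref{causal} coincides with the causal type of the vector $v$ itself.

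By Proposition \ref{section}, the sectional curvature $K$ of $A \cap S_{p,q}(1)$ appearing in Theorem \ref{main} satisfies $K^{-1} = 1 - \langle v, v \rangle_{p,q}$, and hence the main formula simplifies to
\[
{\rm Mag}\ Z \;=\; \frac{1}{1 - K^{-1}} \;=\; \frac{1}{\langle v, v \rangle_{p,q}}.
\]
Thus the sign of ${\rm Mag}\ Z$ agrees with the sign of $\langle v, v \rangle_{p,q}$, which immediately yields the remaining equivalences: ${\rm Mag}\ Z > 0 \Leftrightarrow v$ is spacelike $\Leftrightarrow Z$ is spacelike, and likewise for the timelike case with the opposite sign. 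There is no genuine obstacle here; the only task is to align the formula of Theorem \ref{main}, the explicit radial-scalar-square expression from Proposition \ref{section}, and the one-dimensionality of $W^\perp$ that makes the causal type of $Z$ well-defined through the single vector $v$.
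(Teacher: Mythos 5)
Your argument is correct and is exactly the route the paper intends: the lightlike case is the remark after Proposition \ref{mag0}, and in the non-lightlike case the identities $K^{-1}=1-\langle v,v\rangle_{p,q}$ from Proposition \ref{section} and ${\rm Mag}\ Z=(1-K^{-1})^{-1}$ from Theorem \ref{main} give ${\rm Mag}\ Z=\langle v,v\rangle_{p,q}^{-1}$ for the perpendicular foot $v$ spanning the one-dimensional $W^{\perp}$. The paper leaves the corollary as an immediate consequence of Theorem \ref{main}, and your write-up supplies precisely that verification.
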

Suppose that $Z$ is non-degenerate. As stated in Proposition \ref{standard}, the inner product space $\R^{p, q}$ is isomorphic to $(\R^n, \langle -, -\rangle_{Z})$ by the linear map define by $x \mapsto V^{-1}x$. By this isomorphism, the quasi-sphere $S_{p, q}(1)$ is mapped to the quasi-sphere $S_Z(1) = \{x \in \R^n \mid  \langle x, x\rangle_{Z} = 1\}$ isometrically, its center $Va$ is mapped to $a$, and the vectors $v_1, \dots, v_n$ is mapped to $e_1, \dots, e_n$ respectively. Also, the affine subspace ${\rm Aff}\{v_1, \dots, v_n\}$ is mapped to $\{x \in \R^n \mid \sum_i x_i = 1\}$. Hence Theorem \ref{main} restricted to non-degenerate $Z$ is equivalent to the following (the general case can be stated in principle but is complicated).

\begin{thm}\label{main'}
Let $Z \in M_n(\R)$ be a non-degenerate symmetric matrix with $Z_{ii} = 1$ for all $i$. We consider  $(\R^n, \langle -, -\rangle_{Z})$ as a pseudo-Riemannian manifold, and  we  also consider  the submanifold $S_Z(1) = \{x \in \R^n \mid  \langle x, x\rangle_{Z} = 1\}$ of $\R^n$ as a pseudo-Riemannian submanifold of $(\R^n, \langle -, -\rangle_{Z})$. Let $A = \{x \in \R^n \mid \sum_i x_i = 1\}$. Then ${\rm Mag}\ Z \neq 0$ implies that
\[
{\rm Mag}\ Z = \frac{1}{1 - K^{-1}},
\]
where $K\neq 0$ is the sectional curvature of the quasi-sphere $A\cap S_{Z}(1)$. Furthremore,  for the center $a \in A$ of the quasi-sphere $A\cap S_{Z}(1)$, a magnitude weighting $w$ of $Z$ is described as
\[
w = ({\rm Mag}\ Z)a   =  \frac{1}{\langle a, a\rangle_{Z}}a.
\]
In particular, the following are equivalent : 
\begin{enumerate}
\item $Z$ has a positive weighting,
\item ${\rm Mag}\ Z>0$, and the center $a$ of $A\cap S_{Z}(1)$ belongs to the interior of ${\rm Conv}\{e_1, \dots, e_n\}$.
\end{enumerate}
\end{thm}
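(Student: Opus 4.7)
The plan is to deduce Theorem \ref{main'} as a direct coordinate translation of Theorem \ref{main} via the isomorphism of inner product spaces sketched in the paragraph preceding the statement. Since $Z$ is non-degenerate, we have $r = 0$ and a decomposition $Z = V^t I_{p,q} V$ with $V = (v_1\ \cdots\ v_n)$ an invertible matrix. Define $\phi : \R^{p,q} \to \R^n$ by $\phi(x) = V^{-1} x$. A direct computation $\langle \phi(x), \phi(y)\rangle_Z = x^t (V^{-1})^t Z V^{-1} y = x^t I_{p,q} y = \langle x, y\rangle_{p,q}$ shows that $\phi$ is a linear isometry, and clearly $\phi(v_i) = e_i$ for every $i$.

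Consequently $\phi$ restricts to an isometry of quasi-spheres $S_{p,q}(1) \to S_Z(1)$ and carries the affine subspace ${\rm Aff}\{v_1, \dots, v_n\}$ bijectively onto $A = \{x \in \R^n \mid \sum_i x_i = 1\}$. Sectional curvature and the center of a quasi-sphere are isometry invariants, so the intersection $A \cap S_Z(1)$ is a quasi-sphere of the same sectional curvature $K$ as the one in Theorem \ref{main}, and its center is $\phi(Va) = a$. Applying Theorem \ref{main} then yields ${\rm Mag}\ Z = (1 - K^{-1})^{-1}$ together with the weighting formula $w = \frac{1}{\langle Va, Va\rangle_{p,q}} a = \frac{1}{\langle a, a\rangle_Z} a$, where the last equality uses the isometry. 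Summing over the components and using $\sum_i a_i = 1$ then gives the alternative form $w = ({\rm Mag}\ Z)\, a$.

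For the equivalence of (1) and (2), the argument runs as follows. The weighting $w = ({\rm Mag}\ Z)\, a$ is positive if and only if $({\rm Mag}\ Z)\, a_i > 0$ for every $i$, which forces ${\rm Mag}\ Z$ and all $a_i$ to share a common sign; combined with $\sum_i a_i = 1 > 0$, this gives ${\rm Mag}\ Z > 0$ and $a_i > 0$ for all $i$. The latter condition, under the constraint $\sum_i a_i = 1$, is exactly the statement that $a$ lies in the interior of the simplex ${\rm Conv}\{e_1, \dots, e_n\}$. The only genuine obstacle is bookkeeping: verifying that each geometric structure (affine subspace, quasi-sphere, center, sectional curvature, convex hull, and positivity) is correctly transported under $\phi$ so that the conclusions of Theorem \ref{main} translate verbatim into the intrinsic coordinates on $(\R^n, \langle -, -\rangle_Z)$. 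No new analytic content is required beyond Theorem \ref{main} itself.
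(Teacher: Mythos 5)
Your proposal is correct and takes essentially the same route as the paper: the paper also obtains Theorem \ref{main'} by transporting Theorem \ref{main} through the linear isometry $x \mapsto V^{-1}x$, which sends $v_i$ to $e_i$, $S_{p,q}(1)$ to $S_Z(1)$, and ${\rm Aff}\{v_1, \dots, v_n\}$ to $A$, so that all geometric data (center, curvature, convex hull) translate verbatim. Your additional check that positivity of $w = ({\rm Mag}\ Z)\,a$ together with $\sum_i a_i = 1$ forces ${\rm Mag}\ Z > 0$ and $a$ in the (relative) interior of the simplex is exactly the intended reading of the equivalence of (1) and (2).
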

\section{Upper bound for magnitude}\label{sec:upbd}
In this section, we give a proof of Theorem \ref{upbd}.

\begin{prop}\label{tensor}
Let $Z \in M_n(\R)$ be a positive semi-definite symmetric matrix with $Z_{ii} = 1$ for all $i$ and $\rank \  Z = p$. Suppose that $Z^{(2)}:=(Z_{ij}^2)_{ij}$ is positive definite. Then we have
\[
{\rm Mag}\ Z^{(2)} \leq p.
\]
When $Z$ is positive definite, the equality holds if and only if $Z = I_n$.
\end{prop}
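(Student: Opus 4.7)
The plan is to apply Theorem \ref{thm1} to $Z^{(2)}$ using a decomposition that exposes the structure inherited from $Z$. Since $\rank Z = p$ and $Z_{ii} = 1$, we may write $Z = V^t V$ with $V = (v_1\ \ldots\ v_n)$ where $v_1, \ldots, v_n \in \R^p$ are unit vectors satisfying $Z_{ij} = \langle v_i, v_j\rangle$. Set $w_i = v_i v_i^t$, viewed as a vector in $\R^{p\times p} \cong \R^{p^2}$ under the identification that carries the Frobenius inner product to the standard one. A direct computation gives
\[
\langle w_i, w_j\rangle = \operatorname{tr}(v_i v_i^t v_j v_j^t) = \langle v_i, v_j\rangle^2 = Z^{(2)}_{ij},
\]
so $Z^{(2)} = W^t W$ with $W = (w_1\ \ldots\ w_n)$. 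Since the radius $R$ in Theorem \ref{thm1} depends only on the Gram matrix of the columns of any decomposition of $Z^{(2)}$, we may compute it using this $W$.

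The key geometric observation is that every $w_i$ satisfies $\|w_i\|^2 = 1$ together with $\operatorname{tr}(w_i) = \|v_i\|^2 = 1$. Hence each $w_i$ lies in both the unit sphere $S \subset \R^{p^2}$ and the hyperplane $H = \{X : \operatorname{tr}(X) = 1\}$. The hyperplane $H$ has distance $1/\sqrt{p}$ from the origin (closest point $I_p/p$), so $H \cap S$ is a sphere inside $H$, centered at $I_p/p$ of radius $\sqrt{(p-1)/p}$. Positive definiteness of $Z^{(2)}$ makes the $w_i$ affinely independent, so $A := {\rm Aff}\{w_1, \ldots, w_n\}$ is an $(n-1)$-dimensional subspace of $H$. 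Let $c'$ denote the orthogonal projection of $I_p/p$ onto $A$. By the Pythagorean theorem,
\[
\|w_i - c'\|^2 = \|w_i - I_p/p\|^2 - \|c' - I_p/p\|^2 = \frac{p-1}{p} - \|c' - I_p/p\|^2
\]
is independent of $i$, so $c'$ is the center of the circumsphere of $\{w_i\}$ in $A$ and its radius $R$ satisfies $R^2 \le (p-1)/p$. Theorem \ref{thm1} then yields
\[
{\rm Mag}\ Z^{(2)} = \frac{1}{1-R^2} \le \frac{1}{1/p} = p.
\]

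For the equality statement with $Z$ positive definite we have $p = n$ and $V$ is invertible. Equality $R^2 = (n-1)/n$ holds iff $c' = I_n/n$, iff $I_n/n \in A$, iff there exist $a_i$ with $\sum_i a_i = 1$ satisfying $V\operatorname{diag}(a)V^t = \sum_i a_iw_i = I_n/n$. Multiplying by $V^{-1}$ on the left and $V^{-t}$ on the right gives $\operatorname{diag}(a) = (1/n)(V^tV)^{-1} = Z^{-1}/n$, whence $Z = \operatorname{diag}(1/(na_i))$ is diagonal; the constraint $Z_{ii} = 1$ then forces $a_i = 1/n$ and $Z = I_n$. The converse is immediate, since $Z = I_n$ gives $w_i = e_ie_i^t$ and $I_n/n = \frac{1}{n}\sum_i w_i \in A$.

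The main obstacle is the clean geometric setup in the middle paragraph: combining the two scalar constraints $\|w_i\| = 1$ and $\operatorname{tr}(w_i) = 1$ to confine the $w_i$ to a sphere of radius exactly $\sqrt{(p-1)/p}$, thereby capping the circumradius that enters Theorem \ref{thm1}.
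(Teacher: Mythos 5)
Your proof is correct and follows essentially the same route as the paper: both rest on the Veronese-type map $v_i \mapsto v_iv_i^t$, the observation that these images are unit vectors lying on the hyperplane $\operatorname{tr} = 1$ at distance $1/\sqrt{p}$ from the origin (which caps the circumradius entering ${\rm Mag}\ Z^{(2)} = (1-R^2)^{-1}$), and the same equality analysis ($X = \lambda I$ forcing $Z$ diagonal, hence $Z = I_n$). The only cosmetic difference is that you realize the $v_i$ in $\R^p$ and work with the genuine Frobenius inner product on $p\times p$ matrices, using the Pythagorean theorem for the projection onto the trace-one hyperplane, whereas the paper stays in $M_n(\R)$ with a positive semi-definite ``modified Frobenius'' form and phrases the same distance bound as a Cauchy--Schwarz inequality against $I_n$.
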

\begin{proof}
Let $\langle -, - \rangle_{F_p} $ be the modified Frobenius inner product on $M_n(\R)$ defined by  
\[
\langle A, B \rangle_{F_p} = \sum_{i, j= 1}^{p} A_{ij}B_{ij}
\]
 for $A, B \in M_n(\R)$. Note that this is a positive semi-definite inner product on $M_n(\R)$. We choose a non-degenerate matrix $V$ satisfying $Z = V^t I_{p, 0, n-p}V$. Then we define linear maps 
\begin{align*}
&\Delta : (\R^n, \langle -, - \rangle_{Z^{(2)}})  \to (\R^n\otimes \R^n, \langle -, - \rangle_{Z\otimes Z}), \\
&V\otimes V : (\R^n\otimes \R^n, \langle -, - \rangle_{Z\otimes Z}) \to (\R^n\otimes \R^n, \langle -, - \rangle_{I_{p, 0, n-p}\otimes I_{p, 0, n-p}}), \\
&\varphi : (\R^n\otimes \R^n, \langle -, - \rangle_{I_{p, 0, n-p}\otimes I_{p, 0, n-p}}) \to (M_n(\R), \langle -, - \rangle_{F_p}),
\end{align*}
by
\begin{align*}
&\Delta(e_i) = e_i\otimes e_i, \\
& V\otimes V(e_i\otimes e_j) = v_i \otimes v_j, \\
&\varphi(e_i \otimes e_j) = e_ie_j^t.
\end{align*}
Note that all of them preserve inner products. Also, we can easily verify that  $\Delta$ is injective, and that both $V\otimes V$ and $ \varphi$ are isomorphisms. Now let $S_{Z^{(2)}}(1) = \{v \in \R^n \mid \langle v, v\rangle_{Z^{(2)}} = 1\}$ be the unit sphere in the inner product space $(\R^n, \langle -, -\rangle_{Z^{(2)}})$. Then, for the center $a \in {\rm Aff}\{e_1, \dots, e_n\}$ of the sphere ${\rm Aff}\{e_1, \dots, e_n\}\cap S_{Z^{(2)}}(1)$, Theorem \ref{main'} implies that
\[
{\rm Mag}\ Z^{(2)} = \langle a, a\rangle_{Z^{(2)}}^{-1}.
\]
Setting $f : = \varphi \circ (V\otimes V) \circ \Delta$, we have 
\begin{align*}
 \langle a, a\rangle_{Z^{(2)}} &= \inf \{ \langle x, x\rangle_{Z^{(2)}} \mid x \in  {\rm Aff}\{e_1, \dots, e_n\}\} \\
&= \inf \{ \langle X, X\rangle_{F_p} \mid X \in  {\rm Aff}\{f(e_1), \dots, f(e_n)\}\} \\
&= \inf \{ \langle X, X\rangle_{F_p} \mid x \in  {\rm Aff}\{v_1v_1^t, \dots, v_nv_n^t\}\}.
\end{align*}
The assumption that  $Z_{ii} = 1$ for all $i$ implies $1 =  \langle I_n, v_iv_i^t\rangle_{F_p}$, hence we obtain that $\langle I_n, X\rangle_{F_p} = 1$ for all $X \in  {\rm Aff}\{v_1v_1^t, \dots, v_nv_n^t\} $. Then the Cauchy--Schwarz inequality implies
\[
1 = \langle I_n, X\rangle_{F_p} \leq  \langle I_n, I_n\rangle_{F_p} \langle X, X\rangle_{F_p} =  p\langle X, X\rangle_{F_p},
\]
and hence we obtain ${\rm Mag}\ Z^{(2)}\leq p$. The equality holds if and only if $\langle X - \lambda I_n, X-\lambda I_n\rangle_{F_p} = 0$ for some $\lambda \in \R$. When $Z$ is positive definite, namely $p = n$, we have 
\begin{align*}
\langle X - \lambda I_n, X-\lambda I_n\rangle_{F_p} = 0 & \Leftrightarrow X = \lambda I_n \\
& \Leftrightarrow  X = \frac{1}{n} I_n \\
& \Leftrightarrow \sum_i\lambda_iv_iv_i^t = \frac{1}{n}I_n \\
& \Leftrightarrow V{\rm diag}(\lambda_i)V^t = \frac{1}{n}I_n \\
& \Leftrightarrow {\rm diag}(\lambda_i)Z = \frac{1}{n} \\
& \Leftrightarrow Z = I_n.
\end{align*}
Thus the equality ${\rm Mag}\ Z^{(2)}= n$ holds only when $Z = I_n$ if $Z$ is positive definite.
\end{proof}

\begin{proof}[Proof of Theorem \ref{upbd}]
Since we have $Z_{X} = Z_{\frac{1}{2}X}^{(2)}$, the positive definiteness of $Z_{\frac{1}{2}X}$ and Theorem \ref{tensor} implies the claim. 
\end{proof}

\begin{rem}
In Proposition \ref{tensor}, if $\mathrm{rank}\ Z = p < n$, then the matrices $Z$ realizing the equality ${\rm Mag}\ Z^{(2)} = p$ are generally not unique. To illustrate this fact, we here describe a construction producing examples with $n = 3$ and $p = 2$ (which can be generalized to the case where $n = p + 1$). Let $v_1, v_2, v_3 \in \R^2$ be vectors of unit norm with respect to the standard inner product which are pairwise linearly independent. We define $Z \in M_3(\R)$ as their Gram matrix $Z = (\langle v_i, v_j \rangle )_{ij}$. By design, the symmetric matrix $Z$ is positive semi-definite, $Z_{ii} = 1$ and $\mathrm{rank}\  Z = 2$.  Furthermore, it turns out that $Z^{(2)}$ is positive definite and ${\rm Mag}\ Z^{(2)}=2$.

To see this result, we let $x_i, y_i$ be the entries of the vector $v_i = (x_i, y_i)^t$. For $i = 1, 2, 3$, define $u_i \in \R^3$ by $u_i = (x_i^2, \sqrt{2}x_iy_i, y_i^2)^t$, and put $U = (u_1\  u_2\  u_3) \in M_3(\R)$. It is easy to verify $U^t\cdot U = Z^{(2)}$ and
$
\det \ U
=
-\sqrt{2}
(x_1y_2 - x_2y_1)
(x_1y_3 - x_3y_1)
(x_2y_3 - x_3y_2)$.
Hence $Z^{(2)}$ is positive definite under our choice of $v_i$. It is also easy to verify $U^t\iota = 1_3$, where $\iota = (1,0,1)^t$. Now, it follows that
$
{\rm Mag}\ Z^{(2)} 
=
1_3^t(U^tU)^{-1}1_3
=
(U^t\iota)^t (U^tU)^{-1} (U^t\iota)
=
\iota^t\iota
= 2$.
\end{rem}

\begin{rem}
    The map $\Delta$ in the proof of Theorem \ref{upbd} is a well-known one so called {\it Veronese embedding}.
\end{rem}

\section{Geometric interpretations of the other facts on magnitude}\label{sec:geomin}
\subsection{Criterion for the existence of magnitude weighting and positive weighting}

Proposition \ref{weighting} gives a geometric criterion for the existence of a magnitude weighting. Also, the last parts of Theorems \ref{main} and \ref{main'} give a geometric criterion for the existence of a positive weighting. To make it easier to understand the situation intuitively, we restate them for the case that $Z$ is positive semi-definite.

\begin{prop}\label{psd}
Let $Z\in M_n(\R)$ be a positive semi-definite symmetric matrix with $Z_{ii} = 1$ for all $i$ and $\rank \  Z = p$. Choose a square matrix $V = (v_1\ \dots \ v_n)$ satisfying that $Z = V^tI_{p, 0, n-p} V$. 
\begin{enumerate}
\item $Z$ admits a magnitude weighting if and only if the affine subspace ${\rm Aff}\{v^{p, 0}_1, \dotsm v^{p, 0}_n\}$ does not contain the origin $0 \in \R^p$. Equivalently, $Z$ admits a magnitude weighting if and only if $ \dim {\rm Aff}\{v^{p, 0}_1, \dotsm v^{p, 0}_n\} = p-1$.
\item Let $c$ be the center of the $(p-2)$-dimensional sphere that appears as the intersection of ${\rm Aff}\{v^{p, 0}_1, \dotsm v^{p, 0}_n\}$ and the unit sphere in $\R^p$. Then $Z$ admits a positive weighting if and only if $c$ is in the interior of the convex hull ${\rm Conv}\{v^{p, 0}_1, \dotsm v^{p, 0}_n\}$.
\end{enumerate}
\end{prop}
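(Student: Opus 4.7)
The proposition is a specialization of Proposition \ref{weighting} and Theorem \ref{main} to the case where $Z$ is positive semi-definite, i.e., its signature is $(p, 0, n-p)$. My plan is simply to observe that with $q = 0$ the pseudo-Euclidean space $\R^{p,q}$ collapses to the ordinary Euclidean space $\R^p$: the quasi-sphere $S_{p,0}(1)$ becomes the standard unit sphere, ``perpendicular'' has its classical meaning, and every non-zero vector is spacelike. Under this dictionary, both claims are direct translations of the general results, so no new ideas are required.

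\textbf{Part (1).} I would apply Proposition \ref{weighting} verbatim to get the criterion ``$Z$ admits a magnitude weighting iff $0 \notin {\rm Aff}\{v_1^{p,0}, \dots, v_n^{p,0}\}$.'' The remaining task is the dimensional equivalent. Since the chosen matrix $V$ is non-degenerate (the $v_i$ form a basis of $\R^n$) and $\pi^{p,0}: \R^n \to \R^p$ is surjective, the projected vectors $v_1^{p,0}, \dots, v_n^{p,0}$ span $\R^p$. Proposition \ref{affine}(2) then forces $\dim {\rm Aff}\{v_1^{p,0}, \dots, v_n^{p,0}\} \in \{p-1, p\}$; and for any finite set $S$ one has ${\rm Span}\,S = {\rm Aff}\,S$ iff $0 \in {\rm Aff}\,S$, so dimension $p$ corresponds to $0 \in {\rm Aff}$ and dimension $p-1$ to $0 \notin {\rm Aff}$. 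This is exactly the piece of linear algebra already appearing at the start of the proof of Proposition \ref{weighting}, so I would just cite that computation rather than redoing it.

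\textbf{Part (2).} I would specialize the last assertion of Theorem \ref{main}. Because the ambient inner product on $\R^{p,0}$ is positive definite, every non-zero vector perpendicular to ${\rm Aff}\{v_1^{p,0}, \dots, v_n^{p,0}\}$ is spacelike; hence $Z$ is spacelike in the sense of Definition \ref{causal} whenever a magnitude weighting exists. Moreover, non-degeneracy of the affine subspace is automatic, so ${\rm Mag}\,Z > 0$ by Proposition \ref{mag0} and the corollary following Theorem \ref{main}. Proposition \ref{section} identifies ${\rm Aff}\{v_1^{p,0}, \dots, v_n^{p,0}\} \cap S_{p,0}(1)$ as a genuine $(p-2)$-sphere with some center $c$, and then the equivalence of (1) and (2) in Theorem \ref{main} reduces exactly to: a positive weighting exists iff $c$ lies in the interior of ${\rm Conv}\{v_1^{p,0}, \dots, v_n^{p,0}\}$.

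\textbf{Expected obstacle.} There is none of substance; the proof is entirely bookkeeping. The only mildly delicate point is making sure that the equivalence between ``$0 \notin {\rm Aff}$'' and ``$\dim {\rm Aff} = p-1$'' in (1) is justified by the surjectivity of $\pi^{p,0}$ together with Proposition \ref{affine}(2), and this is the same elementary fact already exploited in the proof of Proposition \ref{weighting}.
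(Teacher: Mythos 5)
Your proposal is correct and matches the paper's treatment: the paper gives no separate argument for Proposition \ref{psd}, presenting it explicitly as a restatement of Proposition \ref{weighting} and the last part of Theorem \ref{main} in the positive semi-definite case, which is exactly the specialization you carry out. Your added bookkeeping (the dimension dichotomy via Proposition \ref{affine} and the observation that spacelikeness and ${\rm Mag}\ Z>0$ are automatic when $q=0$) is sound and consistent with the computations already in the proof of Proposition \ref{weighting}.
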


We can obtain criteria for the existence of a magnitude weighting and a positive weighting for positive semi-definite matrices with small ranks as follows.

\begin{prop}\label{rank1}
Let $Z\in M_n(\R)$ be a positive semi-definite symmetric matrix with $Z_{ii} = 1$ for all $i$ and $\rank \  Z = 1$.  
If $Z \neq 1_n 1_n^t$, then $Z$ admits no weighting.  If $Z = 1_n 1_n^t$, then $Z$ admits a positive weighting.
\end{prop}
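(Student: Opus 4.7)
The plan is to exploit the rank-one positive semi-definite structure directly, bypassing the machinery of Proposition \ref{psd} because the $p=1$ case admits a very transparent argument. Since $Z$ is positive semi-definite of rank one, I can write $Z = vv^t$ for some $v \in \R^n$, and the diagonal condition $Z_{ii}=1$ forces $v_i^2 = 1$, hence $v_i \in \{+1, -1\}$ for each $i$.

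First I would handle the affirmative case. If all the $v_i$ share the same sign, then $Z_{ij} = v_iv_j = 1$ for every $i, j$, which means $Z = 1_n 1_n^t$. Then $w = \frac{1}{n}1_n$ is a straightforward candidate: one computes $Zw = \frac{1}{n}(1_n^t 1_n)\,1_n = 1_n$, and each $w_i = 1/n > 0$, so this is a positive weighting.

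Next I would handle the negative case. If the $v_i$ do not all share the same sign, then equivalently $Z \neq 1_n 1_n^t$. Suppose for contradiction that some $w \in \R^n$ satisfies $Zw = 1_n$. Expanding, the equation reads $(v^t w)\, v = 1_n$, so $v$ would have to be a nonzero scalar multiple of $1_n$; but $v$ has entries of mixed signs while $1_n$ has all entries $+1$, a contradiction. Hence no magnitude weighting exists.

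There is no real obstacle here; the only point worth being careful about is extracting $v$ with $v_i = \pm 1$ from the rank-one positive semi-definite hypothesis, which is immediate from the spectral theorem (writing $Z = \lambda uu^t$ with $\lambda > 0$ and $\|u\|=1$, then absorbing $\sqrt{\lambda}$ into $u$ to get $v$, and reading off $v_i^2 = Z_{ii} = 1$). The rest is algebraic manipulation.
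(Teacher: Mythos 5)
Your proof is correct, and it takes a deliberately different route from the paper's. The paper derives the statement as a quick corollary of its geometric criterion (Proposition \ref{psd}): writing $Z = V^t I_{1,0,n-1}V$, the projected points $v_i^{1,0}$ are scalars $\pm 1$, so their affine span is either a single point (when all are equal, i.e.\ $Z = 1_n1_n^t$) or all of $\R^1$, which contains the origin and hence obstructs a weighting. You instead argue directly from the rank-one factorization $Z = vv^t$ with $v_i = \pm 1$: the equation $Zw = (v^tw)v = 1_n$ forces $v$ to be a scalar multiple of $1_n$, which is impossible when the signs are mixed. The two arguments are really the same dichotomy (all $v_i$ equal versus not) dressed differently, but yours is self-contained and purely linear-algebraic, requiring none of the affine/geometric machinery, while the paper's version is shorter given that Proposition \ref{psd} is already in hand and fits the expository goal of illustrating the geometric criterion on low-rank examples. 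Both treatments of the positive case ($w = \frac{1}{n}1_n$ for $Z = 1_n1_n^t$) coincide; the paper simply calls it obvious. No gaps.
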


\begin{proof}
Let  $Z = V^tI_{1, 0, n-1} V$ with $V = (v_1\  \dots\  v_n)$.  Note that $\rank \  V  =1$ implies  $ \dim {\rm Aff}\{v^{1, 0}_1, \dots, v^{1, 0}_n\}  =1$ except when $v_i = v_j$ for all $i, j$, equivalently $Z = 1_n 1_n^t$. Hence Proposition \ref{psd} implies the claim. The latter statement is obvious.
\end{proof}

\begin{prop}\label{rank2}
Let $Z\in M_n(\R)$ be a positive semi-definite symmetric matrix with $Z_{ii} = 1$ for all $i$  and $\rank \  Z = 2$.  
Then $Z$ admits a magnitude weighting if and only if it is equivalent to a matrix of the form
\[
\begin{pmatrix}
1_{k, k} & c\,1_{k,n-k}\\
c\, 1_{n-k,k} & 1_{n-k, n-k}
\end{pmatrix},
\]
where $1_{p,q} := 1_p1_q^t$ and $c \in\R$. In this case, $Z$ admits a positive weighting.
\end{prop}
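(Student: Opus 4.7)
The plan is to apply Proposition \ref{psd} with $p = 2$. Writing $Z = V^t V$ with $V = (v_1\ \ldots\ v_n)$ and $v_i = v_i^{2,0} \in \R^2$, the normalization $Z_{ii} = 1$ places each $v_i$ on the unit circle $S^1 \subset \R^2$, so by Proposition \ref{psd}(1) a magnitude weighting exists if and only if all $v_i$ lie on a common affine line $L \subset \R^2$.

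For the forward direction, I would assume that a weighting exists. Since $L \cap S^1$ consists of at most two points, $\{v_i\}$ takes at most two distinct values; combined with $\rank Z = 2$ (which forces the $v_i$ to span $\R^2$), it takes exactly two values $u, u' \in S^1$ with $u \neq u'$. Setting $c := \langle u, u' \rangle$, partitioning the index set into $I = \{i : v_i = u\}$ and its complement, and permuting so that $I = \{1, \ldots, k\}$, one reads off the stated block form of $Z$.

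For the converse, I would realize any matrix of the block form (with $-1 < c < 1$, as forced by positive semi-definiteness together with $\rank Z = 2$) as the Gram matrix of $k$ copies of $u$ followed by $n-k$ copies of $u'$, where $u, u' \in S^1 \subset \R^2$ are chosen with $\langle u, u' \rangle = c$; the resulting points lie on the line through $u$ and $u'$, so Proposition \ref{psd}(1) yields a weighting. To exhibit a positive weighting I would apply Proposition \ref{psd}(2): the zero-dimensional circumsphere $L \cap S^1 = \{u, u'\}$ has center at the midpoint $(u + u')/2$, which lies in the interior of ${\rm Conv}\{u, u'\}$ because $u \neq u'$. Alternatively, a direct computation shows that the block-constant vector $w$ with entries $1/(k(1+c))$ on $I$ and $1/((n-k)(1+c))$ off $I$ satisfies $Zw = 1_n$ and has positive entries.

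The only delicate point I anticipate is excluding the rank-dropping edge cases $c = \pm 1$: the value $c = 1$ forces $u = u'$ and $c = -1$ forces $u = -u'$, both of which give $\rank Z = 1$ and are therefore ruled out by the hypothesis; this also ensures the denominator $1 + c$ in the explicit weighting stays safely nonzero.
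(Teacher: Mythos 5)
Your proposal is correct and follows essentially the same route as the paper: apply Proposition \ref{psd} with $p=2$, observe that the affine span of the unit-circle points is a line (necessarily missing the origin, by the rank hypothesis) exactly when the $v_i$ take two distinct values $u, u'$, read off the block form, and note that the center of $L \cap S^1$ is the midpoint of the segment ${\rm Conv}\{u,u'\}$, hence interior. Your explicit block-constant weighting and the careful exclusion of $c = \pm 1$ are welcome additions but do not change the argument.
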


\begin{proof}
Let $Z = V^tI_{2, 0, n-2} V$ with $V = (v_1\  \dots\  v_n)$. By, Proposition \ref{psd}, $Z$ admits a magnitude weighting if and only if 
${\rm Aff}\{v^{2, 0}_1, \dots, v^{2, 0}_n\}$ is a line not passing through the origin.  
Since we have $v_i \in S^{1}$ for all $i$, ${\rm Aff}\{v^{2, 0}_1, \dots, v^{2, 0}_n\}$ is a line not passing through the origin if and only if the cardinality of the  set $\{v^{2, 0}_1, \dots, v^{2, 0}_n\}$ is $2$. Hence $Z$ admits a magnitude weighting if and only if  $Z = V^tI_{2, 0, n-2} V$ is equivalent to a matrix of the above form.
Moreover, ${\rm Conv}\{v^{2, 0}_1, \dots, v^{2, 0}_n\}$ is the line segment obtained as the intersection of 
${\rm Aff}\{v^{2, 0}_1, \dots, v^{2, 0}_m\}$ with the unit disk, whose midpoint coincides with the center of $S^0$ where $v_i$'s lie.  Hence  Proposition \ref{psd} implies that $Z$ admits a positive weighting.
\end{proof}

When $Z \in M_3(\R)$ is a positive definite symmetric matrix, we obtain the following necessary and sufficient condition for the existence of a positive weighting.

\begin{prop}\label{3by3}
Let $Z\in M_3(\R)$ be a positive definite symmetric matrix with $Z_{ii} = 1$ for all $i$. Then $Z$ admits a positive weighting if and only if 
\[
Z_{ij} + Z_{jk} - Z_{ik} < 1, 
\]
for $\{i,j,k\} = \{1,2,3\}$.
\end{prop}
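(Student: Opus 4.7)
The plan is to apply the geometric criterion in Proposition \ref{psd} (the positive definite case of Theorem \ref{main'}) and translate the ``circumcenter-inside-the-triangle'' condition into the stated inequalities via elementary triangle geometry. Since $Z$ is positive definite, I write $Z = V^t V$ with $V = (v_1\ v_2\ v_3)$, and the columns $v_1,v_2,v_3 \in \R^3$ are linearly independent unit vectors. Linear independence forces $v_2-v_1$ and $v_3-v_1$ to be linearly independent, so ${\rm Aff}\{v_1,v_2,v_3\}$ is a genuine $2$-plane in $\R^3$, and its intersection with the unit sphere is a $1$-sphere (i.e.\ a circle) on which the three points lie. The center $c$ of this circle is, by construction, the circumcenter of the Euclidean triangle $v_1v_2v_3$ sitting inside this plane. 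By Proposition \ref{psd}(2), $Z$ admits a positive weighting if and only if $c$ lies in the interior of the triangle ${\rm Conv}\{v_1,v_2,v_3\}$.

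Next I invoke the classical fact from planar geometry: the circumcenter of a triangle lies in its interior if and only if the triangle is acute (in a right triangle it lies on the hypotenuse, and in an obtuse one it lies outside). So the positive-weighting condition is equivalent to all three interior angles of $v_1v_2v_3$ being strictly acute.

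Finally I translate ``acute'' into the matrix entries of $Z$. Using $\langle v_i,v_i\rangle = Z_{ii} = 1$ and $\langle v_i,v_j\rangle = Z_{ij}$, the squared side lengths are
\[
|v_i - v_j|^2 = 2 - 2 Z_{ij}.
\]
By the law of cosines, the angle at vertex $v_k$ (opposite to the side $v_iv_j$) is acute iff $|v_i-v_j|^2 < |v_i-v_k|^2 + |v_j-v_k|^2$, i.e.\
\[
2(1 - Z_{ij}) < 2(1 - Z_{ik}) + 2(1 - Z_{jk}),
\]
which simplifies to $Z_{ik} + Z_{jk} - Z_{ij} < 1$. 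Requiring this for each of the three vertices $k$ gives exactly the stated system of inequalities, completing the proof.

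I do not expect a real obstacle here; the only point that needs a moment of care is confirming that the $1$-sphere appearing in Proposition \ref{psd}(2) is literally the circumscribed circle of the planar triangle (so that its center is the planar circumcenter), and that positive definiteness of $Z$ guarantees the three vectors are not collinear in $\R^3$, so ${\rm Aff}\{v_1,v_2,v_3\}$ has dimension exactly $2$ and the intersection is a proper $1$-sphere rather than a point or empty.
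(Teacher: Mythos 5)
Your proof is correct and follows essentially the same route as the paper: both reduce to the circumcenter-in-the-interior criterion from Proposition \ref{psd}, invoke the classical fact that this holds exactly when the triangle is acute, and translate acuteness into the entries of $Z$ via $|v_i - v_j|^2 = 2 - 2Z_{ij}$. The extra care you take about the affine span being a genuine $2$-plane is implicit in the paper but harmless to spell out.
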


\begin{proof}
Let $Z = V^t\cdot V$ with $V = (v_1\ v_2\  v_3)$.  
Then $v_1, v_2, v_3$ lie on a unit circle $\Sigma$, and by Proposition \ref{psd},  
$Z$ admits a positive weighting if and only if the interior of the triangle with vertices $v_1, v_2, v_3$ contains the center of $\Sigma$.  A triangle’s circumcenter lies inside the triangle if and only if it is acute, which is equivalent to
\[
|v_i - v_j|^2 + |v_j - v_k|^2 - |v_i - v_k|^2 > 0,
\]
for $\{i,j,k\} = \{1,2,3\}$. Since $|v_i - v_j|^2 = 2 - 2\langle v_i, v_j\rangle = 2 - 2Z_{ij}$, this condition becomes
\[
Z_{ij} + Z_{jk} - Z_{ik} < 1, 
\]
for $\{i,j,k\} = \{1,2,3\}$.
\end{proof}

It is difficult to obtain a condition similar to Proposition \ref{3by3} for higher degree matrices. However, the following  well-known fact combined with Proposition \ref{psd} shows that the chance of obtaining a positive weighting is very small.

\begin{prop}\label{putnum}\cite{HS}
When $n+1$ distinct points are randomly chosen from an $(n-1)$-dimensional sphere,  
the probability that their convex hull contains the center of the sphere is $2^{-n}$.
\end{prop}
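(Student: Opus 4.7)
The plan is to carry out the classical symmetry argument due to Wendel. Let $v_1,\dots,v_{n+1}$ be i.i.d.\ uniform random points on $S^{n-1}\subset\R^n$. Since the uniform distribution on $S^{n-1}$ is invariant under $v\mapsto -v$, for every sign vector $\eps\in\{\pm 1\}^{n+1}$ the tuple $(\eps_1 v_1,\dots,\eps_{n+1} v_{n+1})$ has the same joint distribution as $(v_1,\dots,v_{n+1})$. Averaging this identity over $\eps$ gives
\[
\Pr\!\bigl(0\in{\rm Conv}\{v_1,\dots,v_{n+1}\}\bigr)
=\frac{1}{2^{n+1}}\,\mathbb{E}\Bigl[\,\#\{\eps\in\{\pm 1\}^{n+1}:0\in{\rm Conv}\{\eps_1 v_1,\dots,\eps_{n+1} v_{n+1}\}\}\Bigr],
\]
so the problem reduces to counting, for a generic fixed configuration, the number of sign patterns that place the origin inside the convex hull.

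Next I would reformulate this inner count via the separating hyperplane theorem. Provided $v_1,\dots,v_{n+1}$ are in general position (an event of probability one: no $n$ of them lie in a common linear hyperplane), the statement $0\notin{\rm Conv}\{\eps_1 v_1,\dots,\eps_{n+1} v_{n+1}\}$ is equivalent to the existence of $u\in\R^n$ with $\eps_i=\mathrm{sign}\langle u,v_i\rangle$ for every $i$. Therefore the number of ``bad'' sign vectors $\eps$ equals the number of open chambers of the central hyperplane arrangement $\{u\in\R^n:\langle u,v_i\rangle=0\}_{i=1}^{n+1}$. By the classical Schl\"{a}fli formula, $n+1$ generic hyperplanes through the origin in $\R^n$ cut the space into exactly $2\sum_{k=0}^{n-1}\binom{n}{k}=2(2^n-1)$ chambers.

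Subtracting from the total $2^{n+1}$ sign vectors shows that, almost surely, exactly $2^{n+1}-2(2^n-1)=2$ sign patterns are ``good''. Dividing by $2^{n+1}$ yields the stated probability $2^{-n}$. The main obstacle is to handle the two ingredients cleanly: one must justify almost-sure general position (which follows because, for each subset of size $n$, the Lebesgue measure on $(S^{n-1})^n$ of tuples spanning a common linear hyperplane is zero) and the chamber count. The chamber count itself is the most substantive ingredient, and I would recall its short proof by induction on the number of hyperplanes, tracking how each new hyperplane in general position subdivides existing chambers.
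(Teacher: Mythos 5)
Your argument is correct. Note first that the paper itself gives no proof of this proposition: it is quoted from Howard--Sisson \cite{HS} as a known fact (it generalizes Putnam 1992 A-6), so there is no ``paper proof'' to compare against. What you have written is the classical Wendel symmetry argument, which is indeed the standard proof. The three ingredients all check out: (i) the antipodal symmetry of the uniform measure on $S^{n-1}$ justifies the averaging identity over the $2^{n+1}$ sign vectors; (ii) by strict separation of the origin from the compact convex hull, the ``bad'' sign vectors $\eps$ are exactly those realized as $\eps_i=\mathrm{sign}\langle u,v_i\rangle$ for some $u$ off all the hyperplanes $\langle u, v_i\rangle = 0$, and since the set of $u$ realizing a fixed sign vector is an open convex cone, these are in bijection with the chambers of the central arrangement; (iii) for $n+1$ hyperplanes through the origin in $\R^n$ in general position (almost surely the case here, as every $n$ of the $v_i$ are linearly independent with probability one), the chamber count is $2\sum_{k=0}^{n-1}\binom{n}{k}=2(2^n-1)$, leaving exactly $2^{n+1}-2(2^n-1)=2$ good sign vectors (necessarily an antipodal pair $\{\eps,-\eps\}$, a reassuring consistency check) and hence probability $2^{-n}$. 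For $n=3$ this recovers the familiar $1/8$. The only step you leave as a sketch is the Schl\"afli chamber count, which is classical and proved exactly by the induction you indicate; including that induction (or a reference) would make the write-up self-contained.
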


Next, we give sufficient conditions for $Z$ not to admit a positive weighting.

\begin{prop}\label{pw1}
Let $Z\in M_n(\R)$ be a positive definite symmetric matrix with $Z_{ii} = 1$ for all $i$.
If there exists  $1 \leq i \leq n$ such that, for all $1 \leq j \leq n$,
\[
Z_{ij} \geq \frac{1}{{\rm Mag}\ Z},
\]
then $Z$ does not admit a positive weighting.
\end{prop}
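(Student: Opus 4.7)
The plan is to argue by contradiction from the defining equation $Zw = 1_n$ of the magnitude weighting. Assume $w = (w_1, \ldots, w_n)^t$ is a positive weighting, so $w_j > 0$ for every $j$. Since $Z$ is positive definite, so is $Z^{-1}$, hence $\mathrm{Mag}\ Z = 1_n^t Z^{-1} 1_n > 0$, and this value equals $\sum_j w_j$.

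The $i$-th row of the equation $Zw = 1_n$ reads $\sum_j Z_{ij} w_j = 1$. Combining this with the hypothesis $Z_{ij} \geq 1/\mathrm{Mag}\ Z$ and positivity of each $w_j$, I obtain
\[
1 \;=\; \sum_{j=1}^n Z_{ij} w_j \;\geq\; \frac{1}{\mathrm{Mag}\ Z}\sum_{j=1}^n w_j \;=\; \frac{\mathrm{Mag}\ Z}{\mathrm{Mag}\ Z} \;=\; 1.
\]
So equality must hold throughout, and because every $w_j$ is strictly positive, this forces $Z_{ij} = 1/\mathrm{Mag}\ Z$ for every $j$.

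Specializing to $j = i$ yields $1 = Z_{ii} = 1/\mathrm{Mag}\ Z$, hence $\mathrm{Mag}\ Z = 1$ and therefore $Z_{ij} = 1$ for every $j$. For any $j \neq i$ (which exists since $n \geq 2$ is implicit in the statement), the vector $e_i - e_j \neq 0$ satisfies
\[
\langle e_i - e_j,\, e_i - e_j\rangle_Z \;=\; Z_{ii} - 2Z_{ij} + Z_{jj} \;=\; 1 - 2 + 1 \;=\; 0,
\]
contradicting the positive definiteness of $Z$.

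There is no real obstacle here; the only subtle point is ensuring that $\mathrm{Mag}\ Z$ is positive so that dividing by it preserves inequalities, which is automatic from positive definiteness. The argument is purely algebraic and does not require invoking the geometric description from Theorem~\ref{main'}, although geometrically it corresponds to the observation that the center $a = (\mathrm{Mag}\ Z)^{-1} w$ of the circumsphere cannot lie in the interior of $\mathrm{Conv}\{e_1,\dots,e_n\}$ when the $i$-th row of $Z$ is uniformly bounded below by $1/\mathrm{Mag}\ Z$.
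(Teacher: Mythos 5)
Your proof is correct, and it takes a genuinely different route from the paper's. The paper argues geometrically: by Proposition \ref{psd}, a positive weighting exists if and only if the circumcenter of $\{v_1,\dots,v_n\}$ lies in the interior of their convex hull, which fails exactly when all the $v_j$ lie in a common closed hemisphere; the hypothesis $Z_{ij}\ge 1/{\rm Mag}\ Z$ is then translated, via the law of cosines on the circumsphere, into the statement that every $v_j$ lies in the closed hemisphere centred at $v_i$. Your argument instead works directly with the $i$-th row of $Zw=1_n$: summing the termwise inequality $Z_{ij}w_j\ge w_j/{\rm Mag}\ Z$ against $\sum_j w_j={\rm Mag}\ Z>0$ forces equality in every term, hence $Z_{ij}=1/{\rm Mag}\ Z$ for all $j$; the case $j=i$ then gives ${\rm Mag}\ Z=1$ and $Z_{ij}=1$ for all $j$, contradicting positive definiteness via the null vector $e_i-e_j$. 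This is shorter, purely algebraic, and bypasses Theorem \ref{main'} entirely, while the paper's route supplies the geometric picture (the hemisphere condition) that motivates the proposition. A genuine point in your favour is that your argument makes explicit that $n\ge 2$ is needed: for $n=1$ the matrix $Z=(1)$ satisfies the hypothesis yet admits the positive weighting $w=(1)$, so the proposition as stated silently assumes $n\ge 2$, a case distinction the paper's proof does not surface.
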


\begin{proof}
Let $Z = V^t\cdot V$ with $V = (v_1\  \dots\  v_n)$. Let $\Sigma$ be the circumsphere of $\{v_1, \dots, v_n\}$ with the center $c$.  
By Proposition \ref{psd}, the following are equivalent: 
\begin{itemize}
\item  $Z$ does not admit a positive weighting,
\item  $c$ is not in the interior of ${\rm Conv}\{v_1, \dots, v_n\}$, 
\item  The vectors $v_i$'s are  in a common hemisphere of $\Sigma$.  
\end{itemize}
Let $d_\Sigma$ denote the spherical distance on $\Sigma$, and let $R$ be the radius of $\Sigma$.  
If there exists $1 \leq i \leq n$ such that for all $1 \leq j \leq n$,
\begin{align}\label{dsigma}
d_\Sigma(v_i, v_j) \leq \pi R / 2,
\end{align}
then $\{v_1, \dots, v_n\}$ are in the hemisphere centered at $v_i$.  Let $\theta_{ij}$ be the angle $\angle v_i c v_j $.  Then the inequality (\ref{dsigma}) is equivalent to  $\cos \theta_{ij} \geq 0$.  
Since
\[
2R^2 - 2R^2 \cos \theta_{ij} = |v_i - v_j|^2 = 2 - 2Z_{ij},
\]
we have $\cos \theta_{ij} = 1 - (1 - Z_{ij})/r^2$, and the inequality (\ref{dsigma}) is equivalent to
\[
\frac{1}{1 - Z_{ij}} \geq R^{-2} = \frac{{\rm Mag}\ Z}{{\rm Mag}\ Z - 1}.
\]
Solving this inequality gives
\[
Z_{ij} \ge \frac{1}{{\rm Mag}\ Z}.
\]
\end{proof}

\begin{prop}\label{pw2}
Let $Z\in M_n(\R)$ be a positive definite symmetric matrix with $Z_{ii} = 1$ for all $i$.
If we have
\[
\frac{2}{n}\sum_p Z_{ip} - 1 \ge \frac{1}{{\rm Mag}\ Z},
\]
for all $i$, then $Z$ does not admit a positive weighting.
\end{prop}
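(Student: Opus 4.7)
The plan is to mirror the proof of Proposition \ref{pw1}, but with the pole of the separating hemisphere shifted from a single vertex $v_i$ to the direction built from the centroid $\bar v := \frac{1}{n}\sum_i v_i$. Decompose $Z = V^t \cdot V$ with $V = (v_1\ \dots\ v_n)$, and let $\Sigma$ be the circumsphere of $\{v_1,\dots,v_n\}$ with center $c$ and radius $R$. By Proposition \ref{psd}, it suffices to exhibit a closed half-space bounded by a hyperplane through $c$ that contains every $v_i$, which I will produce using $\bar v - c$ as the outward normal.

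The key calculation is the inner product $\langle v_i - c, \bar v - c\rangle$. By Theorem \ref{thm1}, $c = Va$ with $Za = \frac{1}{{\rm Mag}\,Z}\cdot 1_n$ and $\sum_i a_i = 1$; hence $\langle c, v_p\rangle = (Za)_p = \frac{1}{{\rm Mag}\,Z}$ for every $p$, so $\langle c, \bar v\rangle = \frac{1}{{\rm Mag}\,Z}$ and $|c|^2 = \frac{1}{{\rm Mag}\,Z}$. The three cross-terms collapse to the same value, leaving
\[
\langle v_i - c, \bar v - c\rangle = \frac{1}{n}\sum_p Z_{ip} - \frac{1}{{\rm Mag}\,Z}.
\]
The hypothesis of Proposition \ref{pw2} rearranges to $\frac{1}{n}\sum_p Z_{ip} \ge \frac{1}{2}+\frac{1}{2\,{\rm Mag}\,Z}$, and since $R<1$ forces ${\rm Mag}\,Z \ge 1$ (by Theorem \ref{thm1}), one has $\frac{1}{2}+\frac{1}{2\,{\rm Mag}\,Z} \ge \frac{1}{{\rm Mag}\,Z}$. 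Hence $\langle v_i - c, \bar v - c\rangle \ge 0$ for every $i$, so every $v_i$ lies in the closed half-space $\{x : \langle x - c, \bar v - c\rangle \ge 0\}$; therefore $c \notin {\rm int}\,{\rm Conv}\{v_1,\dots,v_n\}$ and positivity of the weighting fails by Proposition \ref{psd}.

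The only genuinely delicate step is ruling out the degenerate case $\bar v = c$, where the normal vanishes. I would handle it by noting that $\bar v = c$ forces the barycentric coordinates to be $a_i = 1/n$, whence $\sum_p Z_{ip} = n/{\rm Mag}\,Z$; substituting into the hypothesis yields $2/{\rm Mag}\,Z - 1 \ge 1/{\rm Mag}\,Z$, i.e.\ ${\rm Mag}\,Z \le 1$, which combined with ${\rm Mag}\,Z \ge 1$ forces $R = 0$, so the $v_i$'s coincide, contradicting their linear independence for $n\ge 2$. The main non-routine step is guessing the correct separating direction $\bar v - c$; once one observes that $c$ is orthogonal to each $v_j - c$, the computation is immediate, and the hypothesis of Proposition \ref{pw2} is actually slightly stronger than the sharp sufficient condition $\frac{1}{n}\sum_p Z_{ip} \ge 1/{\rm Mag}\,Z$ that the calculation produces.
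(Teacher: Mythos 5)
Your argument is correct (for $n\ge 2$; for $n=1$ the statement is degenerate in both your proof and the paper's), and it uses a genuinely different separation mechanism. The paper shows that the hypothesis is \emph{exactly equivalent} to $\max_i |m-v_i|^2 \le |m-\overline{w}|^2$, where $m$ is the barycenter and $\overline{w}$ the circumcenter: all $v_i$ then lie in a closed ball centered at $m$ whose boundary passes through $\overline{w}$, so $\overline{w}$ cannot be interior to ${\rm Conv}\{v_1,\dots,v_n\}$. The key input there is Proposition \ref{barycircum}, which gives $|m-\overline{w}|^2 = \frac{1}{n^2}\sum_{ij}Z_{ij} - \frac{1}{{\rm Mag}\ Z}$. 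You instead separate with the hyperplane through the circumcenter $c$ with normal $\bar v - c$, and your identity $\langle v_i - c, \bar v - c\rangle = \frac{1}{n}\sum_p Z_{ip} - \frac{1}{{\rm Mag}\ Z}$ checks out (it rests on the same facts both proofs use, namely $\langle m, v_i\rangle = \frac{1}{n}\sum_p Z_{ip}$ and $\langle c, v_i\rangle = |c|^2 = \langle c, m\rangle = \frac{1}{{\rm Mag}\ Z}$). Your route buys a little more: since ${\rm Mag}\ Z \ge 1$, the half-space condition $\frac{1}{n}\sum_p Z_{ip} \ge \frac{1}{{\rm Mag}\ Z}$ is weaker than the stated hypothesis, so you actually establish a sharper sufficient criterion, whereas the paper's ball condition translates exactly into the stated one. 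Your explicit handling of the degenerate case $\bar v = c$ is also a point in your favor; the paper leaves this implicit (there the inequality would force all $v_i = m$, contradicting linear independence).
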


\begin{proof}
Let $Z = V^t\cdot V$ with $V = (v_1\  \dots\  v_n)$, and set $m := \frac{1}{n}V1_n$.  Let $w$ be the magnitude weighting of $Z$. Then Theorem \ref{main} implies that the center of the circumsphere of $\{v_1, \dots, v_n\}$ is $\overline{w} := Vw / {\rm Mag}\ Z$.  
It is clear that the inequality
\[
\max_i |m - v_i|^2 \le |m - \overline{w}|^2
\]
implies that $Z$ does not admit a positive weighting.  Since we have
\begin{align*}
|m - v_i|^2 
&= \langle m - v_i, m - v_i \rangle_{I_n} \\
&= \langle \frac{1}{n}V1_n - Ve_i, \frac{1}{n}V1_n - Ve_i \rangle_{I_n} \\
&= \langle \frac{1}{n}1_n - e_i, \frac{1}{n}1_n - e_i \rangle_Z \\
&= \frac{1}{n^2}\langle 1_n, 1_n \rangle_Z - \frac{2}{n}\langle 1_n, e_i \rangle_Z + \langle e_i, e_i \rangle_Z \\
&= \frac{1}{n^2}\sum_{ij} Z_{ij} - \frac{2}{n}\sum_p Z_{ip} + 1,
\end{align*}
 Proposition \ref{barycircum} implies that
\begin{align*}
&\max_i |m - v_i|^2 \le |m - \overline{w}|^2 \\
&\Leftrightarrow \frac{1}{n^2}\sum_{ij} Z_{ij} - \frac{2}{n}\min_i \sum_p Z_{ip} + 1 \le \frac{1}{n^2}\sum_{ij} Z_{ij} - \frac{1}{{\rm Mag}\ Z} \\
&\Leftrightarrow \frac{2}{n}\min_i \sum_p Z_{ip} - 1 \geq \frac{1}{{\rm Mag}\ Z} \\
&\Leftrightarrow \forall i,\ \frac{2}{n}\sum_p Z_{ip} - 1 \geq \frac{1}{{\rm Mag}\ Z}.
\end{align*}
This completes the proof.
\end{proof}

\begin{rem}
Proposition \ref{pw1} supports the following intuition for the magnitude of a finite metric space. We use Willerton’s {\it penguin valuation}  \cite{W1} that interprets the magnitude weighting as a thermal distribution in a group of penguins that tend to maintain the thermal balance :  
If there exists a penguin $i$ that is very close to all other penguins $j$,  
it receives too much heat, and to maintain thermal balance at each point,  
penguin $i$ must emit negative heat.  
Indeed, if $d(x_i, x_j)$ is very small, then $Z_{ij}$ is close to $1$,  
and since ${\rm Mag}\ Z > 1$ for a positive definite $Z$, it is likely that $Z_{ij} \geq 1 / {\rm Mag}\ Z$.  
Thus the weighting acquires negative components, corresponding to the emission of negative heat.  
Note, however, that this argument assumes $Z$ is positive definite.
\end{rem}

\subsection{Rayleigh quotient-like expression of magnitude}
For a positive semi-definite symmetric matrix $Z$ admitting a magnitude weighting, Leinster--Meckes \cite{LM} showed the following Rayleigh quotient-like formula : 
\[
{\rm Mag}\ Z = \sup_{\substack{ a \in \R^n \\ a^tZa \neq 0}} \frac{(\sum_ia_i)^2}{a^tZa}.
\]
Furthermore,  if $Z$ is positive definite,  then the supremum is attained by exactly the nonzero
scalar multiples of the magnitude weighting  of $Z$. We can understand this formula geometrically as follows. Let $\rank \  Z = p$ and $Z = V^tI_{p, 0, n-p} V$ with $V = (v_1\ \dots \ v_n)$. Then we have 
\begin{align*}
{\rm Aff}\{v_1^{p, 0}, \dots, v_n^{p, 0}\} &= \{V^{p, 0}a \mid a\in \R^n, \sum_ia_i = 1\} \\
&= \{V^{p, 0}a/(\sum_ia_i) \mid a\in \R^n, \sum_ia_i \neq 0\}.
\end{align*}
 Note that the center of the sphere ${\rm Aff}\{v_1^{p, 0}, \dots, v_n^{p, 0}\}  \cap S_{p, 0}(1)$ is the orthogonal projection $P$ of the origin $0 \in \R^p$ onto ${\rm Aff}\{v_1^{p, 0}, \dots, v_n^{p, 0}\} $. We also note that, since $Z$ admits a magnitude weighting, Theorem \ref{psd} implies  $\langle V^{p, 0}a/(\sum_ia_i), V^{p, 0}a/(\sum_ia_i) \rangle_{I_p} \neq 0$ if $\sum_ia_i \neq 0$. Hence  $\sum_ia_i \neq 0$ implies $\langle a, a \rangle_{Z} = a^tZa \neq 0$. Since the norm of the orthogonal projection $|P|^2$ is the infimum of the norm of vectors on ${\rm Aff}\{v_1^{p, 0}, \dots, v_n^{p, 0}\}$,  we have
\begin{align*}
 \langle P, P\rangle_{I_p}&= \inf_{\substack{a\in\R^n \\ \sum_ia_i \neq 0}}\langle V^{p, 0}a/(\sum_ia_i), V^{p, 0}a/(\sum_ia_i) \rangle_{I_p} \\
 &= \inf_{\substack{a\in\R^n \\ \sum_ia_i \neq 0}}\langle Va/(\sum_ia_i), Va/(\sum_ia_i) \rangle_{I_n} \\
&= \inf_{\substack{a\in\R^n \\ \sum_ia_i \neq 0}}\frac{\langle a, a \rangle_{Z}}{(\sum_ia_i)^2}\\
&= \inf_{\substack{a\in\R^n \\ \sum_ia_i \neq 0, a^tZa\neq 0}}\frac{a^tZa}{(\sum_ia_i)^2}.
\end{align*}
Now Theorem \ref{thm1} implies that 
\begin{align*}
{\rm Mag}\ Z &= \langle P, P\rangle_{I_p}^{-1} \\
&= \sup_{\substack{a\in\R^n \\ \sum_ia_i \neq 0, a^tZa\neq 0}}\frac{(\sum_ia_i)^2}{a^tZa} \\
&= \sup_{\substack{a\in\R^n \\ a^tZa\neq 0}}\frac{(\sum_ia_i)^2}{a^tZa}.
\end{align*}
The supremum is attained when $V^{p, 0}a$ is a scalar multiple of $P$, namely $a$ is a scalar multiple of a magnitude weighting.
\subsection{Spread and magnitude}
\begin{prop}\label{barycircum}
Let $Z \in M_n(\R)$ be a symmetric matrix with $Z_{ii} = 1$ for all $i$. Suppose that $Z$ admits a magnitude weighting $w$ and ${\rm Mag}\ Z \neq 0$. Choose a decomposition  $Z = V^tI_{p, q, r} V$ with $V=(v_1\ \dots\ v_n)$. Let $m := \frac{1}{n}\sum_iv_i^{p, q}$ be the barycenter of the vectors $v_1^{p, q}, \dots, v_n^{p, q}$. Note that  the center of the quasi-sphere ${\rm Aff}\{v_1^{p, q}, \dots, v_n^{p, q}\} \cap S_{p, q}(1)$ is $\overline{w} := V^{p, q}w/{\rm Mag}\ Z$. Then we have
\[
|m-\overline{w}|_{I_{p, q}}^2 = \frac{\sum_{ij}Z_{ij}}{n^2}-\frac{1}{{\rm Mag}\ Z}.
\]
In particular, when $Z$ is semi-positive definite,  we have ${\rm Mag}\ Z\geq n^2/\sum_{ij}Z_{ij}$, and the following are equivalent for positive definite $Z$.
\begin{enumerate}
\item ${\rm Mag}\ Z= n^2/\sum_{ij}Z_{ij}$,
\item $m = \overline{w}$,
\item Each row of $Z$ has the same sum.
\item $1_n$ is an eigenvector of $Z$.
\end{enumerate}
\end{prop}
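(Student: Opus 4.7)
The plan is to translate the squared pseudo-Euclidean distance $|m-\overline{w}|^2_{I_{p,q}}$ into an inner product with respect to $Z$, using the identity $\langle V^{p,q}x, V^{p,q}y\rangle_{p,q} = \langle Vx, Vy\rangle_{p,q,r} = \langle x, y\rangle_Z$ for all $x, y \in \R^n$, which holds because $I_{p,q,r}$ annihilates the last $r$ coordinates of $V$. Setting $a := w/{\rm Mag}\ Z$, so that $\sum_i a_i = 1$ and $\overline{w} = V^{p,q}a$ by Theorem \ref{main}, I rewrite $m - \overline{w} = V^{p,q}(\tfrac{1}{n}1_n - a)$ and use the identity to get $|m-\overline{w}|^2_{I_{p,q}} = \langle \tfrac{1}{n}1_n - a, \tfrac{1}{n}1_n - a\rangle_Z$.

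Next I expand this into three terms. The first, $\langle 1_n, 1_n\rangle_Z = \sum_{ij} Z_{ij}$, is direct; the other two are computed using $Za = 1_n/{\rm Mag}\ Z$ (which is just $Zw = 1_n$ divided by ${\rm Mag}\ Z$), giving $\langle 1_n, a\rangle_Z = n/{\rm Mag}\ Z$ and $\langle a, a\rangle_Z = 1/{\rm Mag}\ Z$. Collecting the three contributions with the coefficients $1/n^2$, $-2/n$, $1$ from the expansion immediately yields the asserted formula. This step is routine linear algebra once the key identity is in place.

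For the second half, when $Z$ is positive semi-definite one has $q = 0$, so $\langle \cdot, \cdot\rangle_{I_{p,q}}$ is the standard Euclidean inner product on $\R^p$ and the left-hand side is non-negative, giving ${\rm Mag}\ Z \ge n^2/\sum_{ij} Z_{ij}$ (positivity of ${\rm Mag}\ Z = 1/\langle a, a\rangle_Z$ and of $\sum_{ij} Z_{ij} = \langle 1_n, 1_n\rangle_Z$ is automatic, and neither can vanish in the presence of a weighting). When $Z$ is positive definite the formula is non-degenerate, so (1) $\Leftrightarrow$ (2) is immediate. For (2) $\Leftrightarrow$ (3): $V$ is invertible, so $m = \overline{w}$ is equivalent to $\tfrac{1}{n}1_n = a$, i.e.\ $w = \tfrac{{\rm Mag}\ Z}{n}1_n$; applying $Z$ gives $Z \cdot 1_n = \tfrac{n}{{\rm Mag}\ Z}1_n$, which is exactly the statement that every row sum of $Z$ equals the common value $n/{\rm Mag}\ Z$. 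Finally (3) $\Leftrightarrow$ (4) is tautological. No step presents a real obstacle; the only mild care needed is in identifying $\overline{w}$ with $V^{p,q}a$ via Theorem \ref{main}, after which everything is bookkeeping.
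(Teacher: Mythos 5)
Your proposal is correct and follows essentially the same route as the paper: pull the pseudo-Euclidean norm of $m-\overline{w}$ back to the $Z$-inner product of $\tfrac{1}{n}1_n - w/{\rm Mag}\ Z$, expand using $Zw = 1_n$, and then read off the equivalences in the positive definite case from non-degeneracy of the norm and invertibility of $V$. No substantive differences from the paper's argument.
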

\begin{proof}
Since we have $m =  \frac{1}{n}\sum_iv^{p, q}_i =  \frac{1}{n}V^{p, q}1_{n}$, 
\begin{align*}
|m-\overline{w}|_{I_{p, q}}^2  &= \langle  \frac{1}{n}V^{p, q}1_n- \overline{w}, \frac{1}{n}V^{p, q}1_{n}- \overline{w}\rangle_{I_{p, q}} \\
&= \langle V^{p, q}( \frac{1}{n}1_{p+q}- \frac{1}{{\rm Mag}\ Z}w), V^{p, q}( \frac{1}{n}1_{n}- \frac{1}{{\rm Mag}\ Z}w)\rangle_{I_{p, q}} \\
&= \langle V( \frac{1}{n}1_{n}- \frac{1}{{\rm Mag}\ Z}w), V( \frac{1}{n}1_{n}- \frac{1}{{\rm Mag}\ Z}w)\rangle_{I_{p, q, r}} \\
&= \langle \frac{1}{n}1_{n}- \frac{1}{{\rm Mag}\ Z}w, \frac{1}{n}1_{n}- \frac{1}{{\rm Mag}\ Z}w\rangle_{Z} \\
&= \langle \frac{1}{n}1_n, \frac{1}{n}1_n\rangle_{Z} -2\langle  \frac{1}{{\rm Mag}\ Z}w, \frac{1}{n}1_n\rangle_{Z} +  \langle \frac{1}{{\rm Mag}\ Z}w, \frac{1}{{\rm Mag}\ Z}w\rangle_{Z} \\
&= \frac{1}{n^2}\sum_{ij}Z_{ij}-\frac{2}{{\rm Mag}\ Z} + \frac{1}{{\rm Mag}\ Z} \\
&= \frac{1}{n^2}\sum_{ij}Z_{ij}-\frac{1}{{\rm Mag}\ Z}.
\end{align*}
When $Z$ is positive definite, we also have
\begin{align*}
{\rm Mag}\ Z =  n^2/\sum_{ij}Z_{ij} &\Leftrightarrow |m-\overline{w}|_{I_{n}}^2= 0 \\
&\Leftrightarrow m=\overline{w}\\
&\Leftrightarrow w=\frac{{\rm Mag}\ Z}{n}1_n\\
&\Leftrightarrow \forall i, j, \sum_pZ_{ip} = \sum_pZ_{jp}.
\end{align*}
This completes the proof.
\end{proof}

The quantity $n^2/\sum_{ij}Z_{ij} $ is exactly the $2$-{\it spread} defined by Willerton \cite{W}, and the above inequality is also obtained by himself. Proposition \ref{barycircum} shows that the differnce between the $2$-spread and the magnitude is exactly the distance between the barycenter and the circumcenter of $v^{p, q}_i$'s.  He also defined the $Q$-{\it spread} $E_Q(X)$ for general $Q \in [0, \infty]$ and a finite metric space $X$ by
\[
E_Q(X) = \begin{cases} \left( \sum_i\frac{1}{n} (\frac{1}{n}\sum_p Z_{ip})^{Q-1} \right)^{\frac{1}{1-Q}} & Q \neq 1, \infty, \\
\Pi_i (\frac{1}{n}\sum_pZ_{ip})^{-\frac{1}{n}} & Q = 1, \\
\min_{i} \frac{1}{\frac{1}{n}\sum_pZ_{ip}} & Q = \infty,
\end{cases}
\]
where $Z = Z_X$. Since we have 
\[
\langle m, v_i^{p, q} \rangle_{I_{p, q}} = \frac{1}{n}\langle 1_{n}, e_i\rangle_Z = \frac{1}{n}\sum_pZ_{ip},
\]
we can rewrite the definition of the $Q$-spread as
\[
E_Q(X) = \begin{cases} \left( \sum_i\frac{1}{n} (\langle m, v_i^{p, q} \rangle_{I_{p, q}} )^{Q-1} \right)^{\frac{1}{1-Q}} & Q \neq 1, \infty, \\
\Pi_i (\langle m, v_i^{p, q} \rangle_{I_{p, q}} )^{-\frac{1}{n}} & Q = 1, \\
\min_{i} \frac{1}{\langle m, v_i^{p, q} \rangle_{I_{p, q}} } & Q = \infty.
\end{cases}
\]
Namely, they are {\it the power mean} of $\langle m, v_i^{p, q} \rangle_{I_{p, q}}$'s, which shows a non-uniformity of vectors $v_i^{p, q}$'s. In particular, we have $E_2(X) = |m|_{I_{p, q}}^2$, nameyl the $2$-spread is the norm of the barycenter.

\section{Another geometric description}\label{sec:another}

This section provides another geometric description of the magnitude.

\begin{thm}\label{thm:another}
Let $Z \in M_n(\R)$ be a symmetric matrix with $Z_{ii} = 1$ for all $i$. Suppose that the signature of $Z$ is $(p, q, r)$, and let $v_1, \dots, v_n \in \R^n$ be linearly independent vectors such that $Z = V^tI_{p, q, r}V$ with $V = (v_1\ \dots\ v_n)$. Suppose also that $Z$ admits a magnitude weighting $w \in \R^n$. Then we have
\[
{\rm Mag}\ Z = 4 R^2,
\]
where $R^2 \in \R$ is the radial scalar square of a quasi-sphere in $(\R^n, \langle -, - \rangle_{p,q,r})$ which circumscribes the points $\{0, v_1, \dots, v_n \}$. Furthermore, the center $c \in \R^n$ of this quasi-sphere is given by $c = \frac{1}{2}Vw$, and is unique if $Z$ is non-degenerate. 
\end{thm}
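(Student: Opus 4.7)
My plan is to translate the circumscribing condition into a linear system on the candidate center $c \in \R^n$, solve that system using the magnitude weighting, and read off $R^2$. A quasi-sphere with center $c$ and radial scalar square $R^2$ passes through $\{0, v_1, \dots, v_n\}$ precisely when
\[
\langle c, c\rangle_{p,q,r} = R^2 \qquad \text{and} \qquad \langle c - v_i, c - v_i\rangle_{p,q,r} = R^2 \quad (1 \leq i \leq n).
\]
Subtracting the first equation from each of the others and using the hypothesis $\langle v_i, v_i\rangle_{p,q,r} = Z_{ii} = 1$ collapses this to the linear conditions $\langle c, v_i\rangle_{p,q,r} = \tfrac{1}{2}$ for all $i$, together with the defining formula $R^2 = \langle c, c\rangle_{p,q,r}$.

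Since $v_1, \dots, v_n$ are linearly independent in $\R^n$, the matrix $V$ is invertible, so I can write $c = \tfrac{1}{2} V w$ for a unique $w \in \R^n$. Using the identity $\langle V a, V b\rangle_{p,q,r} = a^t V^t I_{p,q,r} V b = \langle a, b\rangle_Z$, the linear conditions become $\tfrac{1}{2}(Zw)_i = \tfrac{1}{2}$ for all $i$, i.e.\ $Zw = 1_n$; thus $w$ is forced to be the given magnitude weighting of $Z$. Plugging back,
\[
R^2 \ = \ \langle c, c\rangle_{p,q,r} \ = \ \tfrac{1}{4} \langle w, w\rangle_Z \ = \ \tfrac{1}{4}\, w^t \cdot 1_n \ = \ \tfrac{1}{4}\, {\rm Mag}\ Z,
\]
which yields the desired formula ${\rm Mag}\ Z = 4 R^2$ and simultaneously identifies the center as $c = \tfrac{1}{2} V w$.

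Finally, for uniqueness when $Z$ is non-degenerate, I would note that $r = 0$ makes $I_{p,q}$ invertible; combined with the invertibility of $V$, the linear system $V^t I_{p,q} c = \tfrac{1}{2} 1_n$ has a unique solution for $c$, and then $R^2 = \langle c, c\rangle_{p,q}$ is automatically determined. I do not anticipate any serious obstacle in executing this plan; the only mildly delicate point is interpretational, namely that when $r > 0$ the form $\langle -, -\rangle_{p,q,r}$ is degenerate, so ``quasi-sphere'' must be read as the level set of a possibly degenerate quadratic form and the center is only pinned down modulo the radical. The scalar identity ${\rm Mag}\ Z = 4 R^2$ nevertheless persists unchanged, as it depends only on the two scalar equations derived in the first step.
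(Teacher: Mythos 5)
Your proposal is correct and follows essentially the same route as the paper: reduce the circumscribing condition to the linear system $V^tI_{p,q,r}c=\tfrac{1}{2}1_n$ using $Z_{ii}=1$, identify $c=\tfrac{1}{2}Vw$ with $w$ a magnitude weighting, and compute $R^2=\tfrac{1}{4}w^tZw=\tfrac{1}{4}\,{\rm Mag}\ Z$. The only cosmetic difference is that you parametrize all candidate centers via $c=\tfrac{1}{2}Vw$ and deduce $Zw=1_n$, whereas the paper verifies directly that $c=\tfrac{1}{2}Vw$ solves the system; your closing remark on the degenerate case (center determined only up to the radical, scalar identity unaffected) is consistent with the paper's uniqueness claim being restricted to non-degenerate $Z$.
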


\begin{proof}
We denote $\langle v, v\rangle_{p, q, r}$ by $|v|^2_{p, q, r}$ for $v \in \R^n$ in the following.
A quasi-sphere which circumscribes $\{ 0, v_1, \dots, v_n \}$ is characterized by the equations
\begin{gather*}
| c |^2_{p, q, r} 
= | c - v_i |_{p, q, r}^2
\end{gather*}
for $i$, where $c \in \R^n$ is the center and the radial scalar square is $R^2 = | c |^2_{p,q,r}$. Under the assumption $| v_i |^2_{p,q,r} = Z_{ii} = 1$, the equations are equivalent to $\langle v_i, c \rangle_{p, q, r} = \frac{1}{2}$ for $i$, which is summarized in a single equation $V^tI_{p, q, r}c = \frac{1}{2}1_n$. Using the magnitude weighting, we see
\[
V^tI_{p,q,r}\bigg(\frac{1}{2}Vw \bigg)
=
\frac{1}{2} Zw
= 
\frac{1}{2} 1_n.
\]
Hence a center is given by $c = \frac{1}{2} Vw$. This is unique if $Z$ is non-degenerate, since the magnitude weighting is unique in this case. The radial scalar square is 
\[
R^2
=
| c |_{p, q, r}^2
=
\frac{1}{4}
(Vw)^tI_{p, q, r}(Vw)
=
\frac{1}{4}
w^tZw
=
\frac{1}{4}
w^t1_n
=
\frac{1}{4}{\rm Mag}\ Z,
\]
and the proof is completed.
\end{proof}

Specializing to the positive definite case, one has Theorem \ref{thm:another_description} in Section \ref{sec:introduction}:

\begin{cor}
Let $Z \in M_n(\R)$ be a positive definite symmetric matrix with $Z_{ii} = 1$ for all $i$.  Let $v_1, \dots, v_n \in \R^n$ be linearly independent vectors such that $Z = V^t\cdot V$ with $V = (v_1\ \dots\ v_n)$. Then we have
\[
{\rm Mag}\ Z = 4 R^2,
\]
where $R$ is the radius of the circumsphere of the points $\{0, v_1, \dots, v_n \}$. 
\end{cor}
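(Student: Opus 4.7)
The plan is to deduce this corollary as a direct specialization of Theorem \ref{thm:another} to the positive definite setting. First I would observe that when $Z$ is positive definite, its signature is $(n, 0, 0)$, so the matrix $I_{p,q,r}$ appearing in Theorem \ref{thm:another} becomes the identity $I_n$, and the decomposition $Z = V^t I_{p,q,r} V$ reduces precisely to $Z = V^t\cdot V$, matching the hypothesis of the corollary. In particular, the inner product $\langle -, - \rangle_{n,0,0}$ on $\R^n$ coincides with the standard Euclidean inner product.

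Next I would verify the remaining hypothesis of Theorem \ref{thm:another}, namely that $Z$ admits a magnitude weighting. Since $Z$ is positive definite it is non-degenerate, so $Z^{-1}$ exists and $w := Z^{-1} 1_n$ is the unique magnitude weighting. Thus Theorem \ref{thm:another} is applicable and yields $\mathrm{Mag}\ Z = 4R^2$, where $R^2$ is the radial scalar square of the unique quasi-sphere in $(\R^n, \langle -, -\rangle_{n,0,0})$ circumscribing the points $\{0, v_1, \dots, v_n\}$, with center $c = \frac{1}{2} V w$.

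Finally, because the ambient inner product is genuinely Euclidean, this quasi-sphere is an ordinary Euclidean sphere: the equations $|c|^2 = |c - v_i|^2$ characterize the usual circumcenter, and the radial scalar square $R^2 = |c|^2 \geq 0$ coincides with the square of the standard Euclidean radius of the circumsphere. Substituting this identification into the formula from Theorem \ref{thm:another} produces $\mathrm{Mag}\ Z = 4R^2$ with $R$ the Euclidean circumradius, as claimed. No real obstacle is anticipated, since the argument is a direct translation; the only minor point to confirm is that the $n+1$ points $\{0, v_1, \dots, v_n\}$ indeed determine a unique circumsphere, which follows from $v_1, \dots, v_n$ being linearly independent together with the non-degeneracy of $Z$.
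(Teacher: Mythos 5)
Your proposal is correct and follows exactly the route the paper intends: the corollary is stated as the direct specialization of Theorem \ref{thm:another} to the positive definite case, where the signature is $(n,0,0)$, the quasi-sphere is a genuine Euclidean circumsphere, and the radial scalar square $R^2 = |c|^2$ is the squared circumradius. Your added checks (existence of the weighting via invertibility of $Z$, and nonnegativity of $R^2$) are exactly the minor points one needs to make the specialization rigorous.
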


\bigskip

It is potetially possible to reproduce the results in Section \ref{sec:upbd}--\ref{sec:geomin} by using the formula in Theorem \ref{thm:another}. We can indeed prove Proposition \ref{tensor} (which leads to Theorem \ref{upbd}) in Section \ref{sec:upbd} based on the geometric interpretation of the magnitude in Theorem \ref{thm:another}. An interesting thing is that a simplification of this geometric proof yields yet another proof of Proposition \ref{tensor}, which is purely linear algebraic:

\begin{prop}[Proposition \ref{tensor}] 
Let $Z \in M_n(\R)$ be a positive semi-definite symmetric matrix with $Z_{ii} = 1$ for all $i$ and $\rank \  Z = p$. Suppose that $Z^{(2)}:=(Z_{ij}^2)_{ij}$ is positive definite. Then we have
\[
{\rm Mag}\ Z^{(2)} \leq p.
\]
When $Z$ is positive definite, the equality holds if and only if $Z = I_n$.
\end{prop}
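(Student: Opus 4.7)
The plan is to give a purely linear algebraic argument built around the symmetric $p\times p$ matrix
\[
C := \frac{1}{2}\sum_i w_i\, v_i v_i^t,
\]
where $w \in \R^n$ is the magnitude weighting of $Z^{(2)}$ and $v_1,\dots,v_n \in \R^p$ are vectors whose standard Euclidean inner products satisfy $\langle v_i, v_j\rangle = Z_{ij}$; such vectors exist because $Z$ is positive semi-definite of rank $p$. The conceptual origin of $C$ is that, under the Veronese-type embedding $v\mapsto vv^t$ into the Frobenius inner product space $(M_p(\R),\langle -, -\rangle_F)$, the matrix $C$ is the center of the circum-quasi-sphere from Theorem \ref{thm:another} applied to $Z^{(2)}$; however, this geometric motivation need not appear in the final proof.

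The first step is to extract three identities for $C$ directly from the relations $\langle v_i, v_j\rangle = Z_{ij}$ and $Z^{(2)}w = 1_n$:
\[
v_j^t C v_j = \tfrac{1}{2}\bigl(Z^{(2)}w\bigr)_j = \tfrac{1}{2},\qquad
\operatorname{tr}(C) = \tfrac{1}{2}\sum_i w_i = \tfrac{1}{2}{\rm Mag}\,Z^{(2)},\qquad
\|C\|_F^2 = \tfrac{1}{4}w^tZ^{(2)}w = \tfrac{1}{4}{\rm Mag}\,Z^{(2)}.
\]
Applying the Cauchy--Schwarz inequality on $(M_p(\R),\langle -, -\rangle_F)$ to the pair $(I_p, C)$ then gives $\langle I_p, C\rangle_F^2 \leq \|I_p\|_F^2\,\|C\|_F^2$, which becomes
\[
\bigl(\tfrac{1}{2}{\rm Mag}\,Z^{(2)}\bigr)^2 \leq p \cdot \tfrac{1}{4}{\rm Mag}\,Z^{(2)}.
\]
Since $Z^{(2)}$ is positive definite, ${\rm Mag}\,Z^{(2)} > 0$, and dividing both sides yields ${\rm Mag}\,Z^{(2)} \leq p$.

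For the equality case, assume $Z$ is positive definite, so $p = n$ and $V$ is invertible. Equality in Cauchy--Schwarz forces $C = \lambda I_n$ for some $\lambda$; the identity $v_j^t C v_j = \tfrac{1}{2}$ together with $|v_j|^2 = Z_{jj} = 1$ pins $\lambda = \tfrac{1}{2}$, so $C = \tfrac{1}{2}I_n$. Computing $V^t C V$ in two ways then gives $\tfrac{1}{2}Z = \tfrac{1}{2}Z\,{\rm diag}(w)\,Z$ (using $V^t v_i = Z e_i$), whence ${\rm diag}(w) = Z^{-1}$; since $Z^{-1}$ is therefore diagonal and $Z_{ii} = 1$, one concludes $Z = I_n$. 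The converse is immediate. The conceptually delicate step is the choice of $C$; once it is on the table, the entire argument reduces to one application of Cauchy--Schwarz plus a trace computation.
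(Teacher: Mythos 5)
Your proof is correct, and every identity checks out: $v_j^tCv_j=\tfrac12(Z^{(2)}w)_j=\tfrac12$, $\operatorname{tr}(C)=\tfrac12\sum_iw_i$, and $\|C\|_F^2=\tfrac14 w^tZ^{(2)}w=\tfrac14{\rm Mag}\,Z^{(2)}$, so Cauchy--Schwarz for the pair $(I_p,C)$ gives exactly ${\rm Mag}\,Z^{(2)}\le p$, and the equality analysis correctly forces $Z{\rm diag}(w)Z=Z$ and hence $Z=I_n$. The paper's own proof of this restated proposition runs through the $(n+1)\times(n+1)$ Gram matrix of $I_{p,0,n-p},v_1v_1^t,\dots,v_nv_n^t$ with respect to the modified Frobenius form, whose positive semi-definiteness plus a Schur complement yields $0\le \lvert Z^{(2)}\rvert\,(p-1_n^t(Z^{(2)})^{-1}1_n)$. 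Your argument is a compression of the same circle of ideas: since $1_n^t(Z^{(2)})^{-1}1_n=w^tZ^{(2)}w=4\|C\|_F^2$ and $\operatorname{tr}(C)=\langle I_p,C\rangle_{F}$, your Cauchy--Schwarz inequality is precisely the nonnegativity of the $2\times 2$ Gram determinant of $I_p$ and the single optimal combination $C$, i.e.\ the only minor of the paper's big Gram matrix that actually carries the inequality. What you gain is that no determinant or Schur complement is needed and the circumcenter of Theorem \ref{thm:another} (under the Veronese map $v\mapsto vv^t$) appears explicitly as $C$; what the paper's version gains is that positive semi-definiteness of the full Gram matrix requires no prior knowledge of the weighting $w$ and packages the equality case as vanishing of a Gram determinant. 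One small presentational point: you should note explicitly that for $p<n$ the relevant inner product is only the \emph{modified} Frobenius form $\langle -,-\rangle_{F_p}$ (or, as you do, work with vectors $v_i\in\R^p$ and the honest Frobenius product on $M_p(\R)$, which is equivalent); with your convention the argument is fine as written.
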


\begin{proof}
Take linearly independent vectors $v_1, \dots, v_n \in \R^n$ such that $V^tI_{p,0,n-p}V = Z$ with $V = (v_1\  \dots \ v_n)$. Let $\langle - , - \rangle_{F_p}$ be the modified Frobenius inner product on $M_n(\R)$ defined in the proof of Proposition \ref{tensor}, which is positive semi-definite. It follows that the Gram matrix $\mathrm{Gram}(I_{p,0,n-p},v_iv_i^t)$ of the vectors $I_{p,0,n-p},v_1v_1^t, \ldots, v_nv_n^t$ with respect to $\langle - , - \rangle_{F_p}$ is also positive semi-definite. A direct calculation leads to
\begin{align*}
\langle I_{p, 0, n-p}, I_{n,0,n-p} \rangle_{F_p}
&=
p,
&
\langle I_{p, 0, n-p}, v_iv_i^t \rangle_{F_p}
&=
1,
&
\langle v_iv_i^t, v_jv_j^t \rangle_{F_p}
&=
Z_{ij}^2
\end{align*}
for $i, j = 1, \ldots, n$. Therefore we have
\[
\mathrm{Gram}(I_{p,0,n-p},v_iv_i^t)
=
\left(
\begin{array}{cc}
p & 1_n^t \\
1_n & Z^{(2)}
\end{array}
\right).
\]
Since this is positive semi-definite, its determinant is non-negative. Recalling that $Z^{(2)}$ is assumed to be positive definite, we compute a Schur complement to get
\[
0 \le
\lvert 
\mathrm{Gram}(I_{p,0,n-p},v_iv_i^t)
\rvert
=
\lvert Z^{(2)} \rvert
(p - 1_n^t(Z^{(2)})^{-1}1_n)
=
\lvert Z^{(2)} \rvert
(p - {\rm Mag}\ Z^{(2)}).
\]
Therefore ${\rm Mag}\ Z^{(2)} \le p$. Suppose here that $p = n$ and $Z$ is positive definite. Then $\langle -, - \rangle_{F_p}$ is the usual Frobenius inner product, and hence is positive definite. Because $v_1, \dots, v_n$ are linearly independent, so are $v_1v_1^t, \dots, v_nv_n^t$. By the nature of the Gram matrix with respect to a non-degenerate inner product,  $\lvert 
\mathrm{Gram}(I_n,v_iv_i^t) \rvert = 0$ if and only if $I_n = \sum_{i = 1}^n \lambda_i v_iv_i^t = V \mathrm{diag}(\lambda_i)V^t$ for some $\lambda_i \in \R$. Now, by the same argument as in the proof of Proposition \ref{tensor}, we conclude that $Z = I_n$.
\end{proof}

\end{document}